\documentclass[a4paper,11pt]{article}

\usepackage[a4paper,margin=1in]{geometry}

\usepackage{amsmath,amssymb,amsthm}
\usepackage{mathtools}
\usepackage{bm}
\usepackage{cite}
\usepackage{hyperref}
\usepackage{mathrsfs}
\usepackage{setspace}
\usepackage{float}
\usepackage{tikz}
\usepackage{subcaption}
\usetikzlibrary{shapes.geometric, arrows.meta, fit}
\usepackage{paralist}

\hypersetup{
colorlinks=true,
linkcolor=blue,
citecolor=blue,
urlcolor=blue
}

\theoremstyle{plain}
\newtheorem{theorem}{Theorem}[section]
\newtheorem{lemma}[theorem]{Lemma}
\newtheorem{proposition}[theorem]{Proposition}
\newtheorem{corollary}[theorem]{Corollary}

\theoremstyle{definition}
\newtheorem{definition}[theorem]{Definition}
\newtheorem{problem}[theorem]{Problem}
\newtheorem{example}[theorem]{Example}

\theoremstyle{remark}
\newtheorem{remark}[theorem]{Remark}
\newtheorem*{remark*}{Remark}

\numberwithin{equation}{section}
\allowdisplaybreaks[4]

\def\T{\mathrm T}
\def\rd{\mathrm d}
\def\e{\mathrm e}
\def\diag{\mathrm{diag}}
\def\BA{\mathbb A}
\def\BN{\mathbb N}
\def\BR{\mathbb R}
\def\BC{\mathbb C}
\def\BQ{\mathbb Q}
\def\BS{\mathbb S}
\def\cA{\mathcal A}
\def\cS{\mathcal S}
\def\cL{\mathcal L}
\def\Ga{\Gamma}
\def\Om{\Omega}
\def\al{\alpha}
\def\be{\beta}
\def\ga{\gamma}
\def\la{\lambda}
\def\vp{\varphi}
\def\f{\frac}
\def\pa{\partial}
\def\wt{\widetilde}

\tikzset{
base/.style={circle, draw=black, thick, minimum size=0.8cm, font=\large\bfseries, inner sep=0pt},
arrow/.style={-{Stealth[scale=1.2]}, thick, draw=black!90},
active_node/.style={base, draw=black, fill=gray!30, line width=1.2pt},
box_style/.style={rectangle, draw=black, thick, dashed, inner sep=5pt},
circle_box/.style={circle, draw=red, line width=1.5pt, inner sep=1.5pt},
square_box/.style={rectangle, draw=red, line width=1.5pt, inner sep=5pt},
triangle_box/.style={regular polygon, regular polygon sides=3, draw=red, line width=1.5pt, inner sep=-1pt},
hexagon_box/.style={regular polygon, regular polygon sides=6, draw=red, line width=1.5pt, inner sep=2pt}
}

\title{\bf Strict Positivity Property of Inhomogeneous\\
Subdiffusion Equations and Its Application\\
to Coupled Subdiffusion Systems}

\author{Yimeng Tian\thanks{School of Mathematics, Shanghai University of Finance and Economics, Shanghai 200433, China. E-mail: tian.yimeng@stu.sufe.edu.cn}
\and 
Yikan Liu\thanks{Department of Mathematics, Kyoto University, Kitashirakawa-Oiwakecho, Sakyo-ku, Kyoto 606-8502, Japan. E-mail: liu.yikan.8z@kyoto-u.ac.jp}}

\date{}

\begin{document}

\maketitle

\begin{abstract}
The positivity of solutions to subdiffusion equations has been widely studied, mainly in the context of homogeneous problems for single equations. In this article, we fill the missing strict positivity for inhomogeneous subdiffusion equations with nonnegative and nontrivial source terms by connecting Green's functions for fractional and classical diffusion equations via special functions. As a direct application, we further investigate the strict positivity property of coupled subdiffusion systems with nonnegative and partially nontrivial initial values or sources. Under suitable cooperativeness and connectivity conditions, the strict positivity turns out to propagate not only in time but also across different components of the system, reflecting the intrinsic interactions induced by the coupling structure. These results provide a unified framework for understanding positivity properties of both scalar and coupled subdiffusion equations, offering new insights beyond the classical maximum principle approach.
\end{abstract}

\noindent{\bf Keywords:} Fractional differential equations, Strict positivity property, Coupled subdiffusion system

\noindent{\bf MSC (2010): 35R11, 35B50, 35K57} 

\section{Introduction}\label{sec-intro}

Let $T>0,\al\in(0,1)$ be constants and $\Om\subset\BR^d$ ($d\in\BN:=\{1,2,\dots\}$) be a bounded domain with a smooth boundary $\pa\Om$. We define the fractional derivative of order $\al$ in time, denoted by $\pa_t^\al$, as the inverse operator of the $\al$-th order Riemann-Liouville integral operator
\[
J^\al:L^2(0,T)\longrightarrow L^2(0,T),\quad J^\al f(t):=\int_0^t\f{s^{\al-1}}{\Ga(\al)}f(t-s)\,\rd s,
\]
where $\Ga(\,\cdot\,)$ is the Gamma function. Consider the initial-boundary value problem for a single subdiffusion equation
\begin{equation}\label{Eq:IBVP}
\begin{cases}
\pa_t^\al(u-a)+\cA u=F & \mbox{in }\Om\times(0,T),\\
u=0 & \mbox{on }\pa\Om\times(0,T).
\end{cases}
\end{equation}
Here $\cA:D(\cA)\longrightarrow L^2(\Om)$ denotes the elliptic operator with domain $D(\cA):=H^2(\Om)\cap H_0^1(\Om)$ defined by
\begin{equation}\label{Eq:A-opt}
\cA f:=-\mathrm{div}(\bm A(\bm x)\nabla f)+c(\bm x)f=-\sum_{i,j=1}^d\pa_j(a_{ij}(\bm x)\pa_i f)+c(\bm x)f,\quad f\in D(\cA),
\end{equation}
where $\bm A\in C^1(\overline\Om;\BR^{d\times d}_{\mathrm{sym}})$ is a symmetric matrix-valued function and $0\le c\in C(\overline\Om)$. Moreover, we assume the uniform ellipticity condition, i.e., there exists a constant $\theta>0$ such that
\[
\theta|\bm\xi|^2\le\bm\xi\cdot\bm A(\bm x)\bm\xi,\quad\forall\,\bm x\in\overline\Om,\ \forall\,\bm\xi\in\BR^d,
\]
where $\cdot$ and $|\cdot|$ stand for the Euclidean inner product and norm on $\BR^d$, respectively. Moreover, $a$ is an $\bm x$-dependent function taking the role of the initial value in the sense that $u-a$ lies in the domain of $\pa_t^\al$ for a.e.\! $\bm x\in\Om$, while $F=F(\bm x,t)$ is the source term. The precise assumptions on $a$ and $F$ will be specified later.

Subdiffusion equations with fractional derivatives in time provide an effective mathematical framework for describing anomalous diffusion processes with memory effects arising in heterogeneous media and groundwater transport (see e.g. \cite{AG92,GCR92,MK00}). Within the last decades, fundamental analysis concerning equation \eqref{Eq:IBVP}, such as solution representations and well-posedness theory, has been mostly accomplished in abundant literature including \cite{EK04,L10,SY11,GLY15,J21,KRY20}, to list just a few. Concerning numerical analysis for subdiffusion equations, finite difference and finite element methods have also been extensively investigated (see, for example, \cite{LX07,JLZ13,JZ23}). Moreover, related inverse problems have also attracted considerable attention in recent years, see surveys \cite{LiuLiY19,LiLiuY19,LiY19} and the monograph \cite{KR23}.

Beyond the aforementioned aspects, the maximum principle is one of the most remarkable qualitative properties of parabolic-type equations, which plays a fundamental role in both the theoretical analysis of subdiffusion equations and the study of related inverse problems. For subdiffusion equations with Caputo derivatives, the weak maximum principle was first established in Luchko \cite{L09}. Then the strong maximum principle for the corresponding homogeneous equation was later derived in Zacher \cite{Z13} as a byproduct of a weak Harnack inequality and also independently in Liu et al. \cite{LRY16}. We also refer to the survey paper \cite{LuchkoY19} for a comprehensive overview of maximum principles for time-fractional partial differential equations and their various extensions. Recently, comparison principles for subdiffusion equations with general boundary conditions were founded in \cite{LY23,LY25} for both linear and semilinear cases. It is worth noting that most existing studies are concerned with maximum principles for homogeneous equations. Comparatively little attention has been paid to the strict positivity property of inhomogeneous subdiffusion equations and a unified analytical framework is still lacking. Nevertheless, the propagation of strict positivity for inhomogeneous problems constitutes an indispensable ingredient in establishing positivity results for coupled subdiffusion systems.

Motivated by this observation, we first investigate the strict positivity propagation for the inhomogeneous subdiffusion equation.

\begin{problem}\label{prob:single_source}
Let $u$ be the solution to the inhomogeneous problem
\begin{equation}\label{Eq:single(a=0)}
\begin{cases}
\pa_t^\al u+\cA u=F & \mbox{in } \Om\times(0,T),\\
u=0 & \mbox{on }\pa\Om\times(0,T).
\end{cases}
\end{equation}
If the source term $F$ is nonnegative and not identically zero, then can one conclude some strict positivity of the solution $u$?
\end{problem}

Furthermore, in many practical applications such as interacting population dynamics, ecological systems and biological growth processes, a single subdiffusion equation is often insufficient to describe the underlying mechanisms. This naturally motivates the study of the following initial-boundary value problem for a weakly coupled subdiffusion system of $K\in\BN$ components, which becomes the second object of this article:
\begin{equation}\label{Eq:Coupled_prototype}
\left\{\begin{alignedat}{2}
& \pa_t^{\al_k}(u_k-a_k)+\cA_k u_k+\sum_{\ell=1}^K c_{k\ell}u_\ell=F_k & \quad & \mbox{in }\Om\times(0,T),\\
& u_k=0 & \quad & \mbox{on }\pa\Om\times(0,T),
\end{alignedat}\right.\quad k=1,\dots,K.
\end{equation}
Here $\al_1,\dots,\al_K$ are constants satisfying $1>\al_1\ge\cdots\ge\al_K>0$, and each $\cA_k$ is an elliptic operator similarly to $\cA$ in \eqref{Eq:IBVP}. Further, $c_{k\ell}$ ($k,\ell=1,\dots,K$) are $\bm x$-dependent coupling coefficients, and $F_k=F_k(\bm x,t)$ is the source term of $u_k$. For convenience, we adopt a vector representation
\[
\bm u:=(u_1,\dots,u_K)^\T,\quad\bm a:=(a_1,\dots,a_K)^\T,\quad\bm F:=(F_1,\dots,F_K)^\T,
\]
and introduce formal matrices
\[
\pa_t^{\bm\al}:=\diag(\pa_t^{\al_1},\dots,\pa_t^{\al_K}),\quad\BA:=\diag(\cA _1,\dots,\cA_k),\quad\bm C:=(c_{k\ell})_{1\le k,\ell\le K}
\]
to rewrite \eqref{Eq:Coupled_prototype} concisely as
\begin{equation}\label{Eq:Coupled_prototype_matrix}
\begin{cases}
\pa_t^{\bm\al}(\bm u-\bm a)+(\BA+\bm C)\bm u=\bm F & \mbox{in }\Om\times(0,T),\\
\bm u=\bm0 & \mbox{on }\pa\Om\times(0,T).
\end{cases}
\end{equation}
Then $\BA u$ and $\bm C u$ are the principal and zeroth order terms in the spatial direction respectively, where $\bm C$ is the $\bm x$-dependent coefficient matrix coupling components in $\bm u$.

For \eqref{Eq:Coupled_prototype_matrix}, Li et al. \cite{LHL23} constructed a general framework for mild solutions and proved the corresponding well-posedness results. However, the strict positivity and positivity propagation remain far less understood than in the scalar setting. A non-negativity result for coupled subdiffusion systems was recently obtained in \cite{LY25}. More recently, BenSalah and Liu \cite{BL26} established a strong positivity property in the sense that, under suitable assumptions, a certain Riemann-Liouville integral of the solution is strictly positive. However, this result does not directly imply the strict positivity of the solution itself. Hence, we also investigate the following problem regarding the strict positivity property for the coupled system \eqref{Eq:Coupled_prototype_matrix}.

\begin{problem}\label{prob:coupled}
Let $\bm u$ satisfy \eqref{Eq:Coupled_prototype_matrix}, where all components in the initial value $\bm a$ and the source term $\bm F$ are nonnegative. If some components of $\bm a$ or $\bm F$ do not vanish identically, then can one conclude some strict positivity of $\bm u$ under certain assumptions?
\end{problem}

Building upon the strict positivity propagation established for scalar inhomogeneous subdiffusion equations, we investigates positivity propagation for both homogeneous and inhomogeneous coupled subdiffusion systems. In particular, we provide affirmative answers to Problem~\ref{prob:coupled}, thereby establishing a unified framework for strict positivity properties in both scalar equations and coupled systems.

The remainder of this paper is organized as follows.
In Section \ref{sec-preli}, we introduce several auxiliary lemmas that will be used in the proofs of the main results presented in Section \ref{sec-single} and \ref{sec-couple}. 
Section \ref{sec-single} is devoted to the study of strict positivity for a single equation, where we consider separately the homogeneous and inhomogeneous problems. Correspondingly, we establish Theorem~\ref{thm:SMP(F=0)} and Theorem~\ref{thm:SMP(a=0)} which address these scenarios.
As a direct application of the strict positivity result for the fractional inhomogeneous equation i.e. Theorem~\ref{thm:SMP(a=0)}, Section \ref{sec-couple} investigates the time propagation of positivity in the coupled system for both homogeneous and inhomogeneous problems. In particular, we establish Theorem~\ref{thm:coupled_F=0} and Theorem~\ref{thm:time-propagation}. The key ingredient in the analysis is Proposition~\ref{prop:coupled}, which builds a connection between the adjacency matrix of a graph and the index-set formulation of connectivity.
Finally, Section \ref{sec-concl} concludes the paper with a brief discussion.


\section{Preliminaries}\label{sec-preli}

We start with introducing some notation and recalling several special functions that will be used throughout the paper. The Mittag-Leffler function and the Wright function are defined as (see, e.g., \cite{GKMR14,M10})
\[
\begin{aligned}
E_{\al,\be}(z) & :=\sum_{k=0}^\infty\f{z^k}{\Ga(\al k+\be)},\\
W_{\al,\be}(z) & :=\sum_{k=0}^\infty\f{z^k}{k!\,\Ga(\al k+\be)},
\end{aligned}\quad z\in\BC,\ \al>0,\ \be\in\BC.
\]
It is known that both functions are entire on the complex plane. Next, we present a lemma that will be useful for our subsequent analysis.

\begin{lemma}\label{lem:M-nonnegtive}
For $0<\al<1$, the Wright function $M_\al(r)$ is nonnegative for $r>0$.
\end{lemma}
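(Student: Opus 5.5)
Here $M_\al$ denotes the Mainardi function $M_\al(r)=W_{-\al,1-\al}(-r)=\sum_{k\ge0}\f{(-r)^k}{k!\,\Ga(1-\al-\al k)}$. This is the standard convention, and the index range $-1<-\al<0$ of the Wright function above makes the series entire. The plan is not to analyze the alternating series directly, since its terms have no fixed sign. Instead, I will identify $M_\al$, up to a change of variables, with a one-sided stable probability density, whose nonnegativity is then automatic.

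\emph{Step 1 (Laplace transform).} First I compute the Laplace transform of $M_\al$ on $(0,\infty)$. Using Hankel's integral representation $\f1{\Ga(z)}=\f1{2\pi i}\int_{Ha}\e^{\sigma}\sigma^{-z}\,\rd\sigma$, one obtains
\[
M_\al(r)=\f1{2\pi i}\int_{Ha}\e^{\sigma-r\sigma^\al}\sigma^{\al-1}\,\rd\sigma .
\]
Term-by-term integration, or alternatively this contour formula, then gives the classical identity
\[
\int_0^\infty \e^{-sr}M_\al(r)\,\rd r=E_{\al,1}(-s),\qquad s\ge0.
\]
The same route yields the companion identity
\[
\f{\al}{\tau^{1+\al}}M_\al(\tau^{-\al})=\phi_\al(\tau),\qquad \int_0^\infty \e^{-p\tau}\phi_\al(\tau)\,\rd\tau=\e^{-p^\al},\quad p>0.
\]
To verify it, substitute the contour formula and exchange integrals, which is justified by the exponential decay along the Hankel contour.

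\emph{Step 2 (positivity of $\phi_\al$).} The function $p\mapsto\e^{-p^\al}$ is completely monotone on $(0,\infty)$, because $p^\al$ is a Bernstein function for $0<\al<1$ and $\e^{-g}$ is completely monotone whenever $g$ is Bernstein. Bernstein's theorem then says $\e^{-p^\al}$ is the Laplace transform of a nonnegative measure. By uniqueness of the Laplace transform this measure must be $\phi_\al(\tau)\,\rd\tau$, so $\phi_\al\ge0$ almost everywhere. Since $\phi_\al$ is continuous, being built from the entire function $M_\al$, we get $\phi_\al\ge0$ everywhere on $(0,\infty)$.

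\emph{Step 3 (conclusion).} For $r>0$, set $\tau=r^{-1/\al}$. Then $M_\al(r)=\al^{-1}\tau^{1+\al}\phi_\al(\tau)\ge0$, which proves the lemma.

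The main obstacle is the rigorous verification in Step 1 that $\phi_\al$ has Laplace transform $\e^{-p^\al}$. This requires controlling the interchange of the $\tau$-integral with the Hankel contour. It also requires knowing that $\phi_\al$ is integrable near $0$ and at $\infty$, which follows from the rapid decay of $M_\al(r)$ as $r\to\infty$, equivalently $\tau\to0$, together with $\phi_\al(\tau)\sim\tau^{-1-\al}/\Ga(1-\al)$ as $\tau\to\infty$. If that becomes too technical, the fallback is to cite Mainardi's results in \cite{GKMR14,M10}, where $M_\al$ is shown to be a probability density on $(0,\infty)$.
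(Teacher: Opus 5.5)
Your proof is correct, but it follows the alternative sketched in the paper's Remark after the proof rather than the paper's main argument. The paper works with the pair $\int_0^\infty \e^{-\eta r}M_\al(r)\,\rd r=E_{\al,1}(-\eta)$, which is your first identity in Step 1. It then quotes the complete monotonicity of $\eta\mapsto E_{\al,1}(-\eta)$ and applies Bernstein's theorem and Laplace uniqueness directly to $M_\al$, with no change of variables. You instead pass to the one-sided stable density $\phi_\al(\tau)=\al\tau^{-1-\al}M_\al(\tau^{-\al})$, whose transform is $\e^{-p^\al}$. The complete monotonicity you need is then the elementary fact that $\e^{-g}$ is completely monotone when $g(p)=p^\al$ is a Bernstein function.

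The paper's route is shorter, but it rests on Pollard's theorem on $E_{\al,1}(-x)$. Pollard's classical proof of that theorem goes precisely through the positivity of the one-sided stable density, i.e.\ the representation $M_\al(r)=\al^{-1}r^{-1-1/\al}\phi_\al(r^{-1/\al})$. So your argument proves directly the fact the paper's citation ultimately depends on. The price is checking the companion transform pair and the integrability of $\phi_\al$ at $0$ and $\infty$ required for uniqueness, which you correctly flag.

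Three small remarks. The identity $\cL M_\al=E_{\al,1}(-s)$ is never used in your argument and can be dropped. The tail behaviour is $\phi_\al(\tau)\sim\al\tau^{-1-\al}/\Ga(1-\al)$; you lost the factor $\al$, which does not affect integrability. Your use of continuity to upgrade ``$\ge0$ a.e.'' to ``$\ge0$ for every $r>0$'' fills a step the paper's Radon--Nikodym argument passes over silently.
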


\begin{proof}
We recall the relation between the Mittag-Leffler function and the Wright function (see \cite[p.~249]{M10}):
\[
E_{\al,1}(-\eta)=\int_0^\infty\e^{-\eta r}M_\al(r)\,\rd r,\quad\eta\ge0.
\]
It is well known that $E_{\al,1}(-\eta)$ is completely monotone for $0<\al<1$. Hence, by Bernstein's theorem (see \cite[p.~3]{SSV12}), there exists a nonnegative measure $\mu$ such that
\[
E_{\al,1}(-\eta)=\int_0^\infty\e^{-\eta r}\,\rd\mu(r).
\]
Comparing the two representations and using the uniqueness of the Laplace transform, we readily see that $M_\al(r)$ is the Radon-Nikodym derivative of $\mu$ with respect to $r$ and hence $M_\al(r)\ge0$ for $r>0$.
\end{proof}

\begin{remark*}
The non-negativity of $M_\al$ can also be inferred from the fact that $\exp(-s^\al)$ is the Laplace transform of a probability density, namely the extremal L\'evy stable distribution. See \cite[p.~249]{M10} for further details.
\end{remark*}

We now specify function spaces and the spectral properties of the elliptic operators. Let $(\,\cdot\,,\,\cdot\,)$ be the inner product of $L^2(\Om)$ and $H_0^1(\Om)$, $H^2(\Om)$, etc.\! denote the standard Sobolev spaces \cite{A75}. Let $\{(\la_n,\vp_n)\}_{n=1}^\infty$ be the eigensystem of the self-adjoint elliptic operator $\cA$ defined in \eqref{Eq:A-opt} such that $0<\la_1<\la_2\le\cdots$ and $\la_n\longrightarrow\infty$ as $n\to\infty$. The corresponding eigenfunctions $\{\vp_n\}_{n=1}^\infty \subset H^2(\Om)\cap H_0^1(\Om)$ forms a complete orthonormal basis of $L^2(\Om)$. We define the fractional operator $\cA^\ga$ for $\ga\ge0$ and its associated domain and norm as
\begin{align*}
D(\cA^\ga) & :=\left\{f\in L^2(\Om)\mid\|f\|_{D(\cA^\ga)}:=\|\cA^\ga f\|_{L^2(\Om)}<\infty\right\},\\
\cA^\ga f & :=\sum_{n=1}^\infty\la_n^\ga(f,\vp_n)\vp_n,
\end{align*}
In a parallel manner, we extend the spectral framework to the coupled system. For each $k=1,\dots,K$, let $\{(\la_n^{(k)},\vp_n^{(k)})\}_{n=1}^\infty$ denote the eigensystem of $\cA_k$ similarly as before, from which we again define the fractional operator $\cA_k^\ga$ as well as its domain $D(\cA_k^\ga)$ for $\ga\ge0$. Consequently, we define the diagonal operator $\BA^\ga:=\diag(\cA_1^\ga,\dots,\cA_K^\ga)$ and $D(\BA^\ga)$ with its norm $\|\cdot\|_{D(\BA^\ga)}$.

We now recall the fundamental well-posedness results and the weak positivity property for problem \eqref{Eq:IBVP}.

\begin{lemma}\label{Lem:scalar_well-posedness}
Let $a\in D(\cA^\be)$ and $F\in L^p(0,T;D(\cA^\be))$ with $\be\ge0$ and $p\in[1,\infty]$.
\begin{enumerate}
\item If $F\equiv0,$ then there exists a unique solution $u$ to \eqref{Eq:IBVP} such that
\[
u(t)=\sum_{n=1}^\infty E_{\al,1}(-\la_n t^\al)(a,\vp_n)\vp_n\quad\mbox{in }\bigcap_{0\le\ga\le 1}L^{1/\ga}(0,T;D(\cA^{\be+\ga})),
\]
where we understand $1/\ga=\infty$ if $\ga=0$. Moreover, there exist a constant $C>0$ depending only on $\Om,\al,\cA$ such that
\[
\|u(t)\|_{D(\cA^{\be+\ga})}\le C\|a\|_{D(\cA^\be)}t^{-\al\ga},\quad0<t<T.
\]
\item If $a\equiv0,$ then there exists a unique solution $u$ to \eqref{Eq:IBVP} such that
\[
u(t)=\int_0^t(t-s)^{\al-1}\sum_{n=1}^\infty E_{\al,\al}(-\la_n(t-s)^\al)(F(s),\vp_n)\vp_nF(s)\,\rd s\quad\mbox{in }L^p(0,T;D(\cA^{\be+\ga}))
\]
for any $\ga\in[0,1)$. Moreover, there exists a constant $C>0$ depending only on $\Om,\al,\cA,T,\ga$ such that
\[
\|u\|_{L^p(0,T;D(\cA^{\be+\ga}))}\le C\|F\|_{L^p(0,T;D(\cA^\be))}.
\]
\item If $a\ge0$ in $\Om$ and $F\ge0$ in $\Om\times(0,T),$ then the solution $u\ge0$ in $\Om\times(0,T)$.
\end{enumerate}
\end{lemma}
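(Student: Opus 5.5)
The three parts of Lemma~\ref{Lem:scalar_well-posedness} are standard, and I plan to reduce each to known facts via the eigenfunction expansion of $\cA$ rather than reprove them from scratch.

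For part (1), I will expand $u(t)=\sum_n u_n(t)\vp_n$ with $u_n:=(u(t),\vp_n)$. Taking the inner product of \eqref{Eq:IBVP} (with $F\equiv0$) against $\vp_n$ gives the scalar fractional ODE $\pa_t^\al(u_n-(a,\vp_n))+\la_n u_n=0$. Its unique solution, obtained by Laplace transform, is $u_n(t)=E_{\al,1}(-\la_n t^\al)(a,\vp_n)$. The bound then follows from the uniform estimate $|E_{\al,1}(-\eta)|\le C/(1+\eta)$ for $\eta\ge0$, which yields
\[
\la_n^{\ga}|E_{\al,1}(-\la_n t^\al)|\le C\,\frac{\la_n^\ga}{1+\la_n t^\al}\le C t^{-\al\ga},\quad 0\le\ga\le1 .
\]
Summing over $n$ against $\la_n^{2\be}|(a,\vp_n)|^2$ gives the $D(\cA^{\be+\ga})$ estimate. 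Membership in $L^{1/\ga}(0,T;D(\cA^{\be+\ga}))$ follows because $t^{-\al\ga}\in L^{1/\ga}(0,T)$ when $\al<1$. Uniqueness comes from the uniqueness of each scalar mode, since $u_n$ solves a Volterra equation.

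For part (2), I use the same modal reduction. Each mode solves $\pa_t^\al u_n+\la_n u_n=F_n$, with solution $u_n=\int_0^t (t-s)^{\al-1}E_{\al,\al}(-\la_n(t-s)^\al)F_n(s)\,\rd s$. The estimate uses $|E_{\al,\al}(-\eta)|\le C/(1+\eta)$, so that $\la_n^\ga s^{\al-1}|E_{\al,\al}(-\la_n s^\al)|\le C s^{\al-1-\al\ga}$. This kernel is integrable on $(0,T)$ precisely when $\ga<1$. Young's convolution inequality (in time, uniformly in $n$) then gives the $L^p(0,T;D(\cA^{\be+\ga}))$ bound with a constant depending on $T$ and $\ga$.

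Part (3), the weak positivity, is the only genuinely nontrivial step. By linearity I will split $u=u^{(a)}+u^{(F)}$ and treat the two pieces separately.

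For the initial-value piece, I would use the subordination identity $E_{\al,1}(-\la t^\al)=\int_0^\infty M_\al(r)\e^{-\la r t^\al}\,\rd r$, which follows from the Laplace relation quoted in the proof of Lemma~\ref{lem:M-nonnegtive}. Inserting it into the expansion gives $u^{(a)}(t)=\int_0^\infty M_\al(r)\,(\e^{-rt^\al\cA}a)\,\rd r$. Here $\e^{-\tau\cA}$ is the classical parabolic semigroup, which preserves nonnegativity by the parabolic maximum principle (recall $c\ge0$). Combined with $M_\al\ge0$ from Lemma~\ref{lem:M-nonnegtive}, this yields $u^{(a)}\ge0$.

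For the source piece, I need the analogous representation $t^{\al-1}E_{\al,\al}(-\la t^\al)=\int_0^\infty \al r\, t^{-1}M_\al(r)\,\e^{-\la r t^\al}\,\rd r$. I would derive it by differentiating the previous identity in $t$, using $\frac{\rd}{\rd t}E_{\al,1}(-\la t^\al)=-\la t^{\al-1}E_{\al,\al}(-\la t^\al)$, and then integrating by parts in $r$. Alternatively, I would verify it directly through Laplace transforms in $t$. This again expresses the solution operator as a nonnegative superposition of heat semigroups applied to $F(s)\ge0$.

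Establishing this second kernel identity with the correct nonnegative weight is where I expect the main obstacle. Passing $\la$ to the operator $\cA$ inside the series also requires justification; I would handle that by density, working first with smooth compactly supported nonnegative data and then extending by the continuity estimates from parts (1) and (2).
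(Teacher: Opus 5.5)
Your argument is correct in substance, apart from one formula fixed below. It does not follow the paper, because the paper gives no proof here: it cites \cite{SY11,LLY15} for (i)--(ii) and \cite{L09} for (iii).

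Your treatment of (i)--(ii) is the standard eigenfunction-expansion argument of those references. Your (iii), however, is genuinely different from \cite{L09}. There, weak positivity comes from an extremum principle for the Caputo derivative: at a maximum point $t_0>0$ of a sufficiently regular function, the derivative is nonnegative. That principle is then applied to classical solutions. You instead subordinate both solution operators to the heat semigroup $\e^{-\tau\cA}$ with the nonnegative $M$-Wright weight. After the substitution $\tau=rt^\al$, your two representations are exactly those the paper uses later in Theorems~\ref{thm:2} and~\ref{thm:1}.

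The trade-off is as follows. Luchko's route needs no spectral calculus and extends to settings without a subordination formula, such as first-order terms, time-dependent coefficients, or semilinear comparison. But it works with classical solutions, so $L^2$ data and $L^p$ sources require an approximation step. Your route requires a time-independent generator of a positive semigroup. In exchange, it acts directly on the spectral solution for rough data and is self-contained given Lemma~\ref{lem:M-nonnegtive}. It also shows that (iii) and the paper's strict-positivity results come from the same identity: only positivity preservation of $\e^{-\tau\cA}$ is upgraded to strict positivity of the heat kernel.

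The formula to fix is your second kernel identity, which is off by a factor $t^\al$. At $\la=0$ the left side is $t^{\al-1}/\Ga(\al)$, while your right side is $\al t^{-1}\int_0^\infty rM_\al(r)\,\rd r=t^{-1}/\Ga(\al)$. The correct version is $t^{\al-1}E_{\al,\al}(-\la t^\al)=\al t^{\al-1}\int_0^\infty rM_\al(r)\e^{-\la rt^\al}\,\rd r$. It follows in one line by differentiating your first identity in $t$ and dividing by $-\la$, with no integration by parts. The weight is still nonnegative, so your conclusion stands, and the step you expected to be the main obstacle is routine.

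Two smaller remarks. First, the density argument is optional. Since $\|\e^{-\tau\cA}\|\le1$, the expression $\int_0^\infty M_\al(r)\e^{-rt^\al\cA}a\,\rd r$ converges absolutely as an $L^2(\Om)$-valued Bochner integral, and its coefficients are read off by pairing with $\vp_n$. Second, in (ii), use the uniform-in-$n$ bound to estimate the operator norm of $\cA^\ga$ composed with the solution kernel at time $t-s$. Then apply scalar Young's inequality to $t\mapsto\|u(t)\|_{D(\cA^{\be+\ga})}$. Applying Young mode by mode and then summing gives the right norm only for $p=2$.
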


The assertions in Lemma~\ref{Lem:scalar_well-posedness}(i)--(ii) summarize standard well-posedness results obtained in \cite{SY11,LLY15}. Meanwhile, Lemma~\ref{Lem:scalar_well-posedness}(iii) turns out to be a special case of the non-negativity result established in \cite{L09}.

Next, we turn to the coupled system \eqref{Eq:Coupled_prototype_matrix} and recall the definition of its mild solution.

\begin{definition}(see~\cite{LHL23})
Let $\bm C\in L^\infty(\Om)$, $\bm a\in L^2(\Om)$ and $\bm F\in L^p(0,T;L^2(\Om))$ with $p\in[1,\infty]$. We call $\bm u$ a mild solution to the initial-boundary value problem \eqref{Eq:Coupled_prototype_matrix} if it satisfies the integral equation
\[
\bm u=\bm w+\BQ\bm u\quad\mbox{in }\Om\times(0,T),
\]
where
\begin{gather*}
\bm w(t):=\BS(t)\bm a-\int_0^t \BA^{-1}\BS'(t-\tau)\bm F(\tau)\,\rd\tau,\\
\BQ\bm u(t):=\int_0^t\BA^{-1}\BS'(t-\tau)\bm C\bm u(\tau)\,\rd\tau.
\end{gather*}
Here we denote
\[
\BS(t):=\diag(\cS_1(t),\dots,\cS_K(t)),\quad\BA^{-1}\BS'(t):=\diag(\cA_1^{-1}\cS_1'(t),\dots,\cA_K^{-1}\cS_K'(t)),
\]
where the solution operator $\cS_k(t):L^2(\Om)\longrightarrow L^2(\Om)$ and its formal derivative $\cS'_k(t):L^2(\Om)\longrightarrow L^2(\Om)$ ($k=1,\dots,K$) are defined by 
\begin{align*}
\cS_k(t)\psi & :=\sum_{n=1}^\infty E_{\al_k,1}(-\la_n^{(k)}t^{\al_k})(\psi,\vp_n^{(k)})\vp_n^{(k)},\\
\cS_k'(t)\psi & :=-t^{\al_k-1}\sum_{n=1}^\infty E_{\al_k,\al_k}(-\la_n^{(k)}t^{\al_k})(\psi,\vp_n^{(k)})\vp_n^{(k)}.
\end{align*}
\end{definition}

For the (strict) positivity of vectors and matrices, we adopt the same notation as that in \cite[Definition 1]{BL26}, e.g., we write $\bm x\ge\bm0$ ($\bm x>\bm0$) for a vector $\bm x\in\BR^d$ if all components of $\bm x$ are nonnegative (strictly positive). We state a coupled counterpart of Lemma \ref{Lem:scalar_well-posedness}, i.e., the well-posedness and the weak maximum principle for \eqref{Eq:Coupled_prototype_matrix}.

\begin{lemma}\label{Lem:vector_well-posedness}
Let $\bm C\in L^\infty(\Om)$, $\bm a\in D(\cA^\be)$ and $\bm F\in L^p(0,T;D(\cA^\be))$ with $\be\ge0$ and $p\in[1,\infty]$.
\begin{enumerate}
\item If $\bm F\equiv\bm0,$ then there exists a unique mild solution $\bm u\in L^{1/\ga}(0,T;D(\BA^{\be+\ga}))$ to \eqref{Eq:Coupled_prototype_matrix} for any $\ga\in[0,1]$. Moreover, there exists a constant $C>0$ depending only on $\Om,\bm\al,\BA,\bm C$ such that
\[
\|\bm u(t)\|_{D(\BA^{\be+\ga})}\le C\|\bm a\|_{D(\BA^\be)}t^{-\al_1\ga},\quad0<t<T.
\]
\item If $\bm a\equiv\bm0,$ then there exists a unique mild solution $\bm u\in L^p(0,T;D(\BA^{\be+\ga}))$ to \eqref{Eq:Coupled_prototype_matrix} for any $\ga\in[0,1)$. Moreover, there exists a constant $C>0$ depending only on $\Om,\bm\al,\BA,\bm C,T,\ga$ such that
\[
\|\bm u\|_{L^p(0,T;D(\BA^{\be+\ga}))}\le C\|\bm F\|_{L^p(0,T;D(\BA^\be))}.
\]
\item Let $\bm a\ge\bm0$ in $\Om$ and $\bm F\ge\bm0$ in $\Om\times(0,T)$. If the coefficient matrix $\bm C$ satisfy the cooperativeness condition
\begin{equation}\label{eq-coop}
c_{k\ell}\le0\quad\mbox{ in }\Om,\forall\,k\ne\ell, 
\end{equation}
then $\bm u\ge\bm0$ in $\Om\times(0,T)$.
\end{enumerate}
\end{lemma}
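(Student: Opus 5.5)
The plan is to work entirely at the level of the mild-solution integral equation $\bm u=\bm w+\BQ\bm u$. Parts (i) and (ii) come from a fixed-point argument in the spaces $L^{1/\ga}(0,T;D(\BA^{\be+\ga}))$ and $L^p(0,T;D(\BA^{\be+\ga}))$. Part (iii) comes from a Picard iteration that preserves nonnegativity. Throughout, the scalar estimates of Lemma~\ref{Lem:scalar_well-posedness} are applied componentwise. This is legitimate because $\BS(t)$ and $\BA^{-1}\BS'(t)$ are diagonal, and each diagonal entry is exactly the scalar solution operator for $\cA_k$ with order $\al_k$.

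\textbf{Step 1: estimates for $\bm w$.} By Lemma~\ref{Lem:scalar_well-posedness}(i), $\|\cS_k(t)a_k\|_{D(\cA_k^{\be+\ga})}\le C\|a_k\|_{D(\cA_k^\be)}t^{-\al_k\ga}$. Since $\al_k\le\al_1$ and $t<T$, this is bounded by $Ct^{-\al_1\ga}$. By Lemma~\ref{Lem:scalar_well-posedness}(ii), the Duhamel term $-\int_0^t\cA_k^{-1}\cS_k'(t-\tau)F_k(\tau)\,\rd\tau$ is the scalar solution with zero initial value and source $F_k$, so it obeys the stated $L^p$ bound.

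\textbf{Step 2: contraction for $\BQ$.} The multiplication operator $\bm C\in L^\infty(\Om)$ is bounded on $L^2(\Om)^K$, so $\|\bm C\bm u(\tau)\|_{L^2}\le\|\bm C\|_{L^\infty}\|\bm u(\tau)\|_{L^2}$. The kernel bound $\|\cA_k^{-1}\cS_k'(t)\|_{L^2\to D(\cA_k^{\ga})}\le Ct^{\al_k(1-\ga)-1}$ is integrable, which gives $\|\BQ\bm u(t)\|_{L^2}\le C\int_0^t(t-\tau)^{\al_K-1}\|\bm u(\tau)\|_{L^2}\,\rd\tau$ (up to constants depending on $T$). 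Iterating this estimate, $\BQ^n$ carries the kernel $C^n(t-\tau)^{n\al_K-1}/\Ga(n\al_K)$. Hence $\sum_n\BQ^n$ converges in $L^\infty(0,T;L^2)$ and in $L^p(0,T;L^2)$, and $(I-\BQ)^{-1}$ exists and is bounded. This is the Gronwall-type argument for weakly singular kernels. The solution is $\bm u=\sum_{n\ge0}\BQ^n\bm w$, and uniqueness follows at once.

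\textbf{Step 3: regularity.} Regularity is then bootstrapped: writing $\bm u=\bm w+\BQ\bm u$, the term $\BQ\bm u$ gains $D(\BA^{\ga})$ regularity by the smoothing of $\cA_k^{-1}\cS_k'$. Higher $\be$ is handled by the same argument in $D(\BA^\be)$. Doing so needs $\bm C$ to map $D(\BA^\be)$ into itself, which is an assumption issue.

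\textbf{Step 4: nonnegativity.} I would write $\bm C=\bm D-\bm N$, with $\bm D=\diag(c_{kk})$ and $\bm N\ge0$ off-diagonal by \eqref{eq-coop}. Pick a constant $M\ge\|c_{kk}\|_{L^\infty}$ for all $k$, and rewrite the system as
\[
\pa_t^{\bm\al}(\bm u-\bm a)+\bigl(\BA+M\bigr)\bm u=\bm F+\bigl(M-\bm D+\bm N\bigr)\bm u.
\]
The matrix multiplying $\bm u$ on the right-hand side is entrywise nonnegative. Next, define the Picard iterates $\bm u^{(0)}=0$ and let $\bm u^{(m+1)}$ solve the decoupled scalar problems with $\cA_k+M$, whose zeroth-order coefficient $c+M\ge0$, and with source $\bm F+(M-\bm D+\bm N)\bm u^{(m)}$. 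Lemma~\ref{Lem:scalar_well-posedness}(iii) gives $\bm u^{(m)}\ge0$ by induction. These iterates converge to the mild solution by the same contraction estimate as in Step 2. Since nonnegativity is closed under $L^2$ limits, $\bm u\ge\bm0$.

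The main obstacle is this last step. One must verify that the mild formulation with $\BA+M$ and modified coupling coincides with the original mild solution. I would handle this by uniqueness: both formulations are equivalent in the Laplace-transform sense, and each has a unique solution by Step 2. Alternatively, I would simply cite \cite{LHL23,LY25} for (i)--(iii).
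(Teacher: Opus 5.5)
The paper gives no argument for this lemma. It quotes (i)--(ii) from \cite[Theorem 1]{LHL23} and (iii) from \cite[Proposition 2]{BL26}. Your proposal is a self-contained reconstruction, and Steps 1, 2 and 4 are sound.
\begin{itemize}
\item The Volterra/Neumann-series argument with the weakly singular kernel gives existence and uniqueness in $L^\infty(0,T;L^2)$ and in $L^p(0,T;L^2)$.
\item Step 4 is the same device the paper uses later: the shift by $\bm\Sigma$ with $\bm B=\bm\Sigma-\bm C\ge\bm O$ in the proof of Theorem~\ref{thm:coupled_F=0}. For (iii) you only need the iterates to be nonnegative, not monotone. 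Shifting by a \emph{constant} $M$ keeps the zeroth-order coefficient continuous, which is what lets Lemma~\ref{Lem:scalar_well-posedness}(iii) apply.
\item Your Laplace-transform identification of the shifted and unshifted mild solutions is correct. It reduces to resolvent identities between the Mittag-Leffler kernels of $\cA_k$ and $\cA_k+M$, checkable mode by mode. The paper does not address this point.
\end{itemize}

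\textbf{What fails as written is Step 3 at the endpoint $\ga=1$ of (i).}
\begin{itemize}
\item Your bound $\|\cA_k^{-1}\cS_k'(t)\|_{L^2\to D(\cA_k^\ga)}\le Ct^{\al_k(1-\ga)-1}$ is integrable only for $\ga<1$. The resulting constants blow up like $1/(1-\ga)$.
\item This is not an artifact of the estimate. For a source merely in $L^\infty(0,T;L^2(\Om))$, the Duhamel integral need not lie in $D(\cA_k)$ at a fixed time.
\item Iterating $\bm u=\bm w+\BQ\bm u$ does not help. $\bm C\bm u$ is no better than $L^2$ in space, since multiplication by an $L^\infty$ function destroys spatial regularity.
\end{itemize}
You must use time regularity instead.
\begin{itemize}
\item From the integral equation, $\|\frac{\rd}{\rd s}\BS(s)\bm a\|_{L^2}\le Cs^{-1}\|\bm a\|_{L^2}$, and $\BQ$ maps $L^\infty(0,T;L^2)$ into $C^{\al_K}([0,T];L^2)$.
\item Hence $f:=\bm C\bm u$ satisfies $\|f(\tau)-f(t)\|_{L^2}\le C\|\bm a\|_{L^2}\bigl((t-\tau)/t\bigr)^{\al_K}$ for $t/2\le\tau\le t$.
\item Put $\mathcal K_k(s):=s^{\al_k-1}E_{\al_k,\al_k}(-\cA_k s^{\al_k})$. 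Then $\|\cA_k\mathcal K_k(s)\|\le Cs^{-1}$ and $\int_0^t\cA_k\mathcal K_k(s)\,\rd s=I-\cS_k(t)$, so
\[
\cA_k\int_0^t\mathcal K_k(t-\tau)f_k(\tau)\,\rd\tau=\int_0^t\cA_k\mathcal K_k(t-\tau)\bigl(f_k(\tau)-f_k(t)\bigr)\,\rd\tau+\bigl(I-\cS_k(t)\bigr)f_k(t).
\]
\item The right-hand side converges absolutely and is bounded by $C\|\bm a\|_{L^2}$ (split the integral at $t/2$).
\item This gives $\|\BQ\bm u(t)\|_{D(\BA)}\le C\|\bm a\|_{L^2}$, and hence the $t^{-\al_1}$ bound for $\bm u$.
\end{itemize}

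Two remarks concern the statement rather than your argument.
\begin{itemize}
\item Your caveat about $\be$ is a real defect of the lemma. With only $\bm C\in L^\infty(\Om)$, your argument works verbatim whenever $\be+\ga<1$. This is because $\BQ$ maps $L^\infty(0,T;L^2)$, resp.\ $L^p(0,T;L^2)$, into $D(\BA^s)$-valued functions for every $s<1$. For $\be+\ga>1$ the claim can fail. Already for $K=1$ and $a_1\in D(\cA_1)$, one has $(\cA_1+c_{11})u_1(t)\in D(\cA_1)$ for $t>0$. So $u_1(t)\in D(\cA_1^{3/2})$ would force $c_{11}u_1(t)\in H^1(\Om)$. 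That is false when $c_{11}$ is the indicator of a smooth subdomain on whose boundary $u_1(t)\ne0$.
\item The $T$-dependence your Step 1 puts into the constant of (i) is unavoidable. For $K=1$ and $c_{11}\equiv-\mu$ with $\mu>\la_1^{(1)}$, the first Fourier mode grows like $E_{\al_1,1}\bigl((\mu-\la_1^{(1)})t^{\al_1}\bigr)$. So no bound with $C$ independent of $T$ can hold.
\end{itemize}
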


Lemma~\ref{Lem:vector_well-posedness}(i)--(ii) follow from the well-posedness theory developed in \cite[Theorem 1]{LHL23}, while Lemma~\ref{Lem:vector_well-posedness}(iii) follows from the weak maximum principle established in \cite[Proposition 2]{BL26}.


\section{Strict Positivity of Single Equations}\label{sec-single}

In this section, we establish Theorem~\ref{thm:SMP(F=0)}, Theorem~\ref{thm:SMP(a=0)} and Corollary~\ref{coro:strict-positive} which address the homogeneous problem ($F=0$), the inhomogeneous problem ($a=0$) and the general case, respectively.

The key idea for establishing strict positivity is to show that the fractional equation inherits the positivity-preserving property of the classical heat equation. Rather than proving the strict positivity of a subdiffusion equation directly, we instead construct a connection between the Green's functions of a heat equation and a subdiffusion equation. Through this connection, the subdiffusion equation inherits the infinite propagation speed of a heat equation, which naturally yields its strict positivity. The crucial ingredient in this construction is the Wright function.

We begin with the inhomogeneous case driven by a source term, which has not been thoroughly investigated in the existing literature. The key ingredient is to establish the strict positivity of the Green's function associated with the inhomogeneous fractional equation, as shown in the following theorem.

\begin{theorem}\label{thm:1}
Let $0<\al\le1$ and let $G_\al(\bm x,\bm y,t)$ be the Green's function associated with the inhomogeneous problem defined by
\[
G_\al(\bm x,\bm y,t)=\sum_{n=1}^\infty t^{\al-1}E_{\al,\al}(-\la_n t^\al)\vp_n(\bm x)\vp_n(\bm y),\quad\mbox{a.e.\! }\bm x,\bm y\in\Om,\ t\in(0,T).
\]
Then $G_\al>0$ a.e.\! in $\Om\times\Om\times(0,T)$.
\end{theorem}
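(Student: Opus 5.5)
The plan is to reduce the fractional Green's function to the classical heat kernel through a subordination formula built from the Wright function $M_\al$. Throughout, let $K(\bm x,\bm y,s)=\sum_{n}\e^{-\la_n s}\vp_n(\bm x)\vp_n(\bm y)$ denote the heat kernel of $\pa_s+\cA$ with homogeneous Dirichlet conditions. For $\al=1$ we have $E_{1,1}(z)=\e^z$, so $G_1=K$. Its strict positivity $K>0$ in $\Om\times\Om\times(0,\infty)$ for a connected domain is classical: it is the strong maximum principle for parabolic equations, equivalently the positivity-improving property of the semigroup $\e^{-s\cA}$ for $c\ge0$. I take this as the input and concentrate on $0<\al<1$.

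\textbf{Step 1: a scalar subordination identity.} The first step is to establish, for $\la\ge0$ and $t>0$,
\[
t^{\al-1}E_{\al,\al}(-\la t^\al)=\int_0^\infty \e^{-\la s}\,\Phi_\al(s,t)\,\rd s,\qquad \Phi_\al(s,t):=\f{\al s}{t^{\al+1}}\,M_\al\!\left(\f{s}{t^\al}\right).
\]
I would start from the relation $E_{\al,1}(-\eta)=\int_0^\infty \e^{-\eta r}M_\al(r)\,\rd r$ recalled in the proof of Lemma~\ref{lem:M-nonnegtive}. Substituting $\eta=\la t^\al$ and $r=s/t^\al$ gives
\[
E_{\al,1}(-\la t^\al)=\int_0^\infty \e^{-\la s}\,t^{-\al}M_\al(s t^{-\al})\,\rd s.
\]
Next I would differentiate in $t$, using $\f{\rd}{\rd t}E_{\al,1}(-\la t^\al)=-\la t^{\al-1}E_{\al,\al}(-\la t^\al)$. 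On the right-hand side, differentiating under the integral and integrating by parts in $s$ (which turns the factor $\la$ into an $s$-derivative) should produce exactly the kernel $\Phi_\al$. As a cross-check I would compare Laplace transforms in $t$: both sides should equal $1/(p^\al+\la)$.

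\textbf{Step 2: lifting to the kernels.} Multiply the scalar identity by $\vp_n(\bm x)\vp_n(\bm y)$ and sum over $n$, interchanging sum and integral. This gives
\[
G_\al(\bm x,\bm y,t)=\int_0^\infty K(\bm x,\bm y,s)\,\Phi_\al(s,t)\,\rd s.
\]
The interchange needs justification, since the series for $K$ diverges as $s\to0$. I would verify the identity in the weak sense: pair both sides with test functions $f(\bm x)g(\bm y)$ with $f,g\in C_0^\infty(\Om)$. The paired series converges absolutely because $(f,\vp_n)$ and $(g,\vp_n)$ decay rapidly. Fubini then applies, and since $\Phi_\al\ge0$ is integrable in $s$, the two sides agree as $L^1_{\mathrm{loc}}$ functions, hence a.e.

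\textbf{Step 3: positivity.} Lemma~\ref{lem:M-nonnegtive} gives $\Phi_\al\ge0$. Moreover $\int_0^\infty\Phi_\al(s,t)\,\rd s=t^{\al-1}/\Ga(\al)>0$, which is the identity at $\la=0$. Hence $\Phi_\al(\,\cdot\,,t)$ is nonzero on a set of positive measure. Combined with $K>0$ for all $s>0$, the integrand is nonnegative and strictly positive on a set of positive $s$-measure, so $G_\al(\bm x,\bm y,t)>0$ for a.e.\ $(\bm x,\bm y)$ and every $t\in(0,T)$.

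\textbf{Main obstacle.} The delicate part is Step 2: the pointwise justification near $s=0$ and $s=\infty$ and the interchange of the eigenfunction expansion with the $s$-integral. For the behaviour at $s=\infty$ I would use the bound $K\le C\e^{-\la_1 s}$ for $s\ge1$. For small $s$ I would use the Gaussian upper bound on $K$ together with the integrability of $\Phi_\al$. The scalar computation in Step 1 is routine by comparison.
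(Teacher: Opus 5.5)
Your proposal is correct and follows essentially the paper's route. The paper derives the same subordination identity
\[
t^{\al-1}E_{\al,\al}(-\la_n t^\al)=\int_0^\infty \e^{-\la_n\tau}\,\al\tau t^{-\al-1}M_\al(\tau t^{-\al})\,\rd\tau
\]
by inverting $1/(s^\al+\la_n)=\int_0^\infty\e^{-(\la_n+s^\al)\tau}\,\rd\tau$ with a tabulated transform, where you instead differentiate the $E_{\al,1}$ relation. It then multiplies by $\vp_n(\bm x)\vp_n(\bm y)$ and sums to get $G_\al=\int_0^\infty G_1\,\phi_\al\,\rd\tau$, and concludes positivity from $G_1>0$ and $\phi_\al\ge0$, exactly as you do.

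Two of your steps are more careful than the paper. Your weak-sense justification of the sum--integral interchange (pairing with $f\otimes g$, where $\sum_n|(f,\vp_n)(g,\vp_n)|\le\|f\|\,\|g\|$ makes Fubini applicable) is sound, whereas the paper's appeal to Tonelli does not directly apply, since $\vp_n(\bm x)\vp_n(\bm y)$ changes sign. Likewise, your check that $\int_0^\infty\Phi_\al\,\rd s=t^{\al-1}/\Ga(\al)>0$ proves a fact the paper only asserts.
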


\begin{proof}
The proof is based on the strict positivity of Green's function for the classical heat equation with a homogeneous Dirichlet boundary condition. We then employ the Laplace transform to construct a bridge between classical and fractional Green's functions through a singular integral and the Wright function. This relation enables the fractional equation to inherit the positivity of the heat equation.

To begin with, we invoke the strict positivity of Green's function of the heat equation with a homogeneous Dirichlet boundary condition, that is,
\[
G_1(\bm x,\bm y,t):=\sum_{n=1}^\infty\e^{-\la_n t}\vp_n(\bm x)\vp_n(\bm y)>0,\quad\mbox{a.e.\! }\bm x,\bm y\in\Om,\ t\in (0,T).
\]
On the fractional side, we set $g^\al_n(t):=t^{\al-1}E_{\al,\al}(-\la_n t^\al)$ for simplicity to express Green's function for the subdiffusion equation as
\[
G_\al(\bm x,\bm y,t):=\sum_{n=1}^\infty g^\al_n(t)\vp_n(\bm x)\vp_n(\bm y).
\]

To reveal the relation between the heat equation and the fractional equation, we employ the Laplace transform of $g^\al_n$ as a bridge. Recall the following Laplace transform formula for the Mittag-Leffler function (see \cite[p.~12, Eq.~(1.42)]{M10}):
\[
\cL\left(t^{\be-1}E_{\al,\be}(-\la t^\al)\right)(s)=\f{s^{\al-\be}}{s^\al+\la},\quad s>0.
\]
For later use, we further analyze an improper integral to represent the Laplace-transformed result as
\[
\cL g^\al_n(s)=\f1{s^\al+\la_n}=\int_0^\infty\e^{-(\la_n+s^\al)\tau}\,\rd\tau,\quad s>0.
\]
We now apply the inverse Laplace transform to represent $g^\al_n$ via the Wright function as
\begin{align*}
g^\al_n(t) & =\cL^{-1}\left(\int_0^\infty\e^{-(\la_n+s^\al)\tau}\,\rd\tau\right)(t)=\int_0^\infty\e^{-\la_n\tau}\cL^{-1}\left(\e^{-s^\al\tau}\right)(t)\,\rd\tau\\
& =\int_0^\infty\e^{-\la_n\tau}\phi_\al(t,\tau)\,\rd\tau,
\end{align*}
where we defined $\phi_\al(t,\tau):=\cL^{-1}(\e^{-s^\al\tau})(t)$ and then utilized the Laplace inverse transform of $\e^{-r s^\al}$ (see \cite[p.~247, Eq.~(18)]{L19})
\[
\cL^{-1}\left(\e^{-r s^\al}\right)(t)=\f{\al r}{t^{\al+1}}M_\al(r t^{-\al}),\quad r>0.
\]
Furthermore, it follows from Lemma \ref{lem:M-nonnegtive} that 
\[
\phi_\al(t,\tau)=\al\tau t^{-\al-1}M_\al(\tau t^{-\al})\ge0
\]
and thus $g^\al_n(t)>0$. Therefore, substituting $g_n^\al(t)$ back into $G_\al(\bm x,\bm y,t)$ yields
\begin{align*}
G_\al(\bm x,\bm y,t) & =\sum_{n=1}^\infty\int_0^\infty\e^{-\la_n\tau}\phi_\al(t,\tau)\,\rd\tau\vp_n(\bm x)\vp_n(\bm y)\\
& =\int_0^\infty\sum_{n=1}^\infty\e^{-\la_n\tau}\vp_n(\bm x)\vp_n(\bm y)\phi_\al(t,\tau)\,\rd\tau\\
& =\int_0^\infty G_1(\bm x,\bm y,\tau)\phi_\al(t,\tau)\,\rd\tau>0.
\end{align*}
Here, since $G_1(\bm x,\bm y,\tau)>0$ and $\phi_\al(t,\tau)\ge0,\not\equiv0$, Tonelli's theorem (see \cite[Theorem 2.37, p.~67]{F99}) enables the interchange of the order of integration. This completes the proof.
\end{proof}

We now apply the strict positivity of the Green's function established above to prove the strict positivity of the solution. 
The following theorem provides the strict positivity of the solution to the single source problem, that is, the solution becomes strictly positive from the moment the source term appears.

\begin{theorem}\label{thm:SMP(a=0)}
Let $a=0$ and $F\in L^2(0,T;D(\cA^\be)$ with $\be>d/4-1$. Assume $F\ge0,\not\equiv0$ and define $t_0:=\inf\{t\ge0\mid F(\,\cdot\,,t)\not\equiv0\mbox{ in }\Om\}$. Then the solution $u$ to \eqref{Eq:IBVP} satisfies $u>0$ in $\Om\times(t_0,T)$.
\end{theorem}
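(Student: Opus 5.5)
We represent the solution through the Green's function of Theorem~\ref{thm:1}. By Lemma~\ref{Lem:scalar_well-posedness}(ii) with $a=0$, for a.e.\ $t\in(0,T)$ we have
\[
u(\bm x,t)=\int_0^t\int_\Om G_\al(\bm x,\bm y,t-s)F(\bm y,s)\,\rd\bm y\,\rd s .
\]
The condition $\be>d/4-1$ is there so that, by taking $\ga<1$ close to $1$, we get $u(t)\in D(\cA^{\be+\ga})\subset H^{2(\be+\ga)}(\Om)$ with $2(\be+\ga)>d/2$. Sobolev embedding then makes $u(\,\cdot\,,t)$ continuous, so pointwise statements ``$u>0$ in $\Om$'' make sense. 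To exchange the series defining $G_\al$ with the integral against $F$, we pass to the limit in the partial sums in $L^2$. Both $G_\al\ge0$ and $F\ge0$, so Tonelli's theorem justifies the resulting double integral.

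Fix $t\in(t_0,T)$. By the definition of $t_0$ as an infimum, there is $s_1\in[t_0,t)$ such that $F(\,\cdot\,,s)\not\equiv0$ for $s$ in a set of positive measure in $(t_0,t)$. More precisely, if $F$ vanished a.e.\ on $\Om\times(t_0,t)$, we could replace $t_0$ by a larger value, contradicting its definition, at least once ``$F(\,\cdot\,,t)\not\equiv0$'' is interpreted in the a.e.\ sense. Hence the set $E:=\{(\bm y,s)\in\Om\times(0,t): F(\bm y,s)>0\}$ has positive measure. For each $\bm x$, Theorem~\ref{thm:1} gives $G_\al(\bm x,\bm y,t-s)>0$ for a.e.\ $(\bm y,s)$. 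Therefore the integrand $G_\al(\bm x,\cdot,t-\cdot)F$ is positive on a set of positive measure, and $u(\bm x,t)>0$.

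The main obstacle is the ``a.e.'' nature of Theorem~\ref{thm:1}. It gives positivity a.e.\ in $(\bm x,\bm y,t)$, but we need positivity for every $\bm x\in\Om$ and every $t\in(t_0,T)$. To handle this, I would not use Theorem~\ref{thm:1} as a black box. Instead I would insert its subordination formula directly:
\[
u(\bm x,t)=\int_0^t\int_0^\infty\phi_\al(t-s,\tau)\,v(\bm x,\tau;s)\,\rd\tau\,\rd s,\qquad v(\,\cdot\,,\tau;s):=\e^{-\tau\cA}F(\,\cdot\,,s).
\]
The classical strong maximum principle for the heat semigroup gives $v(\bm x,\tau;s)>0$ for every $\bm x\in\Om$ and $\tau>0$ whenever $F(\,\cdot\,,s)\ge0,\not\equiv0$. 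The function $\phi_\al(r,\cdot)$ is nonnegative and not identically zero for each $r>0$, since its Laplace transform in $\tau$ equals $r^{\al-1}E_{\al,\al}(\cdot)$, which is nonzero. Hence the inner $\tau$-integral is strictly positive for every $\bm x$ and every $s<t$ in the positive-measure set where $F(\,\cdot\,,s)\not\equiv0$, and integrating in $s$ yields $u(\bm x,t)>0$ pointwise.

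The remaining technical work is to justify continuity in $\bm x$ of $v$ and the regularity needed to make each expression meaningful pointwise. For this I would again use $\be>d/4-1$, which gives enough smoothing in $D(\cA^{\be+\ga})$ to make all quantities continuous functions of $\bm x$.
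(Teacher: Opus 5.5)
Your proposal is correct and follows the paper's proof in substance: the Green's function representation from Lemma~\ref{Lem:scalar_well-posedness}(ii), positivity of $G_\al$ by subordinating the Dirichlet heat kernel through the nonnegative Wright-function density $\phi_\al$, and a set of positive measure in $\Om\times(t_0,t)$ on which $F>0$. Your extra step of inserting the subordination formula at the level of $\e^{-\tau\cA}F(\,\cdot\,,s)$ and invoking the heat equation's strong maximum principle is a worthwhile refinement: the paper uses Theorem~\ref{thm:1} as a black box and so only concludes $u(\bm x,t)>0$ for a.e.\ $\bm x\in\Om$, whereas your route gives positivity at every $\bm x$ once the pointwise-regularity details you list are filled in.
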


\begin{proof}
By the definition of $G_\al$ and Lemma \ref{Lem:scalar_well-posedness}(ii), the solution to \eqref{Eq:IBVP} allows the representation
\[
u(\bm x,t)=\int_0^\T\!\!\!\int_\Om G_\al(\bm x,\bm y,t-s)F(\bm y,s)\,\rd\bm y\rd s.
\]
From Theorem \ref{thm:1}, we know that $G_\al>0$ a.e.\! in $\Om\times\Om\times(0,T)$.

Now fix $t>t_0$ arbitrarily. By the definition of $t_0$, we know $F\not\equiv0$ in $\Om\times(t_0,t)$. Since $F\ge0$, there exists a measurable set $E\subset\Om\times(t_0,t)$ with a positive measure such that $F>0$ in $E$. Owing to Theorem \ref{thm:1}, it reveals that
\[
G_\al(\bm x,\bm y,t-s)F(\bm y,s)>0\quad\mbox{a.e.\! }(\bm y,s)\in E,\ \mbox{a.e.\! }\bm x\in\Om,
\]
indicating
\[
u(\bm x,t)\ge\int_E G_\al(\bm x,\bm y,t-s)F(\bm y,s)\,\rd\bm y\rd s>0,\quad\mbox{a.e.\! }\bm x\in\Om.
\]
This completes the proof.
\end{proof}

Although the strict positivity for the homogeneous problem has been established in \cite{LRY16,Z13}, here we provide an alternative proof. As before, we first prove the strict positivity of the Green's function associated with the fractional homogeneous problem, and then use it to establish the strict positivity of the solution.

\begin{theorem}\label{thm:2}
Let $0<\al\le 1$ and let $K_\al(\bm x,\bm y,t)$ be the Green's function associated with the homogeneous problem defined by
\[
K_\al(\bm x,\bm y,t)=\sum_{n=1}^\infty E_{\al,1}(-\la_n t^\al)\vp_n(\bm x)\vp_n(\bm y),\quad\mbox{a.e.\! }\bm x,\bm y\in\Om,\ t\in(0,T).
\]
Then $K_\al> 0$ a.e.\! in $\Om\times\Om\times(0,T)$.
\end{theorem}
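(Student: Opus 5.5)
We mirror the proof of Theorem~\ref{thm:1}: we subordinate the fractional kernel $K_\al$ to the heat kernel $G_1$ through the Wright function $M_\al$. The case $\al=1$ is just the positivity of $G_1$ itself, since then $K_1=G_1$. So we fix $0<\al<1$.

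The key identity is the classical subordination formula
\[
E_{\al,1}(-\la t^\al)=\int_0^\infty M_\al(r)\,\e^{-\la t^\al r}\,\rd r,\qquad \la\ge0,\ t>0.
\]
It follows from the Laplace transform relation recalled in the proof of Lemma~\ref{lem:M-nonnegtive}, evaluated at $\eta=\la t^\al$. Substituting $\tau=t^\al r$ gives
\[
E_{\al,1}(-\la_n t^\al)=\int_0^\infty \e^{-\la_n\tau}\,\psi_\al(t,\tau)\,\rd\tau,\qquad \psi_\al(t,\tau):=t^{-\al}M_\al(\tau t^{-\al}).
\]
By Lemma~\ref{lem:M-nonnegtive} we have $\psi_\al\ge0$. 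Moreover $\int_0^\infty\psi_\al(t,\tau)\,\rd\tau=E_{\al,1}(0)=1$, so $\psi_\al(t,\cdot)\not\equiv0$. Alternatively, the same representation comes from inverting $\cL(E_{\al,1}(-\la t^\al))(s)=s^{\al-1}/(s^\al+\la)$ exactly as in Theorem~\ref{thm:1}.

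Next we insert this into the eigenfunction expansion of $K_\al$ and interchange the sum and the integral:
\[
K_\al(\bm x,\bm y,t)=\int_0^\infty G_1(\bm x,\bm y,\tau)\,\psi_\al(t,\tau)\,\rd\tau.
\]
Since $G_1>0$ for every $\tau>0$ and $\psi_\al(t,\cdot)\ge0$ is nontrivial, the integrand is nonnegative and positive on a set of positive $\tau$-measure. Hence $K_\al(\bm x,\bm y,t)>0$ for a.e.\ $(\bm x,\bm y)$ and every $t\in(0,T)$.

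The main obstacle is justifying the interchange of summation and integration. Tonelli's theorem does not apply term by term, because the individual summands $\e^{-\la_n\tau}\vp_n(\bm x)\vp_n(\bm y)\psi_\al$ change sign. We therefore argue in the weak sense. For nonnegative test functions $f,g\in L^2(\Om)$, the sums $\sum_n \e^{-\la_n\tau}(f,\vp_n)(g,\vp_n)$ are bounded by $\|f\|\,\|g\|$ uniformly in $\tau$. Together with $\int_0^\infty\psi_\al\,\rd\tau=1$, dominated convergence (Fubini) gives
\[
\int_\Om\!\int_\Om K_\al f g=\int_0^\infty\Big(\int_\Om\!\int_\Om G_1 f g\Big)\psi_\al\,\rd\tau .
\]
Then Tonelli on the nonnegative integrand $G_1 f g\,\psi_\al$ identifies $K_\al$ with the subordinated integral almost everywhere. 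This identification is legitimate because $G_1(\cdot,\cdot,\tau)$ is a genuine nonnegative kernel, so the $\tau$-integral $\int_0^\infty G_1\psi_\al\,\rd\tau$ is a well-defined, possibly infinite, measurable function. Strict positivity almost everywhere then follows as above.
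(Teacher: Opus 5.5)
Your proof is correct and follows essentially the same route as the paper. Both proofs represent $E_{\al,1}(-\la_n t^\al)$ as $\int_0^\infty \e^{-\la_n\tau}\,t^{-\al}M_\al(\tau t^{-\al})\,\rd\tau$ and then subordinate $K_\al$ to the positive Dirichlet heat kernel; you obtain the representation more directly from the Laplace relation in Lemma~\ref{lem:M-nonnegtive} via $\tau=t^\al r$, while the paper inverts the Laplace transform of $s^{\al-1}\e^{-s^\al\tau}$.

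Your weak-form justification of the sum--integral interchange is more careful than the paper's bare appeal to Tonelli. Tonelli does not literally apply there, because the summands $\e^{-\la_n\tau}\vp_n(\bm x)\vp_n(\bm y)\phi_\al(t,\tau)$ change sign.
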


\begin{proof}
First of all, again we invoke the strictly positive of Green's function for the heat equation with homogeneous Dirichlet boundary conditions, i.e.,
\[
K_1(\bm x,\bm y,t)=\sum_{n=1}^\infty\e^{-\la_n t}\vp_n(\bm x)\vp_n(\bm y)>0, \quad\mbox{a.e.\! }\bm x,\bm y\in\Om,\ t\in(0,T).
\]
Similarly, for homogeneous problem, we define $h^\al_n(t):=E_{\al,1}(-\la_n t^\al)$ so that Green's function for the subdiffusion equation can be written as
\[K_\al(\bm x,\bm y,t) :=\sum_{n=1}^\infty h^\al_n(t)\vp_n(\bm x)\vp_n(\bm y).\]
By the Laplace transform formula for the Mittag-Leffler function, we have
\[
\cL h^\al_n(s)=\f{s^{\al-1}}{s^\al+\la_n}=\int_0^\infty s^{\al-1}\e^{-(s^\al+\la_n)\tau}\,\rd\tau.
\]
As before, we apply the inverse Laplace transform and represent $h^\al_n(t)$ via the Wright function
\begin{align*}
h^\al_n(t)&=\cL^{-1}\left(\int_0^\infty s^{\al-1}\e^{-(s^\al+\la_n)\tau}\,\rd\tau\right)(t)=\int_0^\infty\e^{-\la_n\tau}\cL^{-1}\left(s^{\al-1}\e^{-s^\al\tau}\right)(t)\,\rd\tau\\
& =\int_0^\infty\e^{-\la_n\tau}\phi_\al(t,\tau)\,\rd\tau,
\end{align*}
where we defined $\phi_\al(t,\tau):=\cL^{-1}(s^{\al-1}\e^{-s^\al\tau})(t)$. Then we recall the Laplace inverse transform of $s^{\al-1}\e^{-r s^\al}$ (see \cite[p.~247, Eq.~(19)]{L19})
\[
\cL^{-1}\left(s^{\al-1}\e^{-r s^\al}\right)(t)=\f1{t^\al}M_\al(r t^{-\al}),\quad r>0.
\]
Again, it follows from Lemma~\ref{lem:M-nonnegtive} that
\[
\phi_\al(t,\tau)=t^{-\al}M_\al(\tau t^{-\al})\ge0,
\]
indicating $h^\al_n(t)>0$. Last but not least, substituting $h_n^\al(t)$ back into $K_\al(\bm x,\bm y,t)$ yields
\begin{align*}
K_\al(\bm x,\bm y,t) & =\sum_{n=1}^\infty\int_0^\infty\e^{-\la_n\tau}\phi_\al(t,\tau)\,\rd\tau\,\vp_n(\bm x)\vp_n(\bm y)\\
& =\int_0^\infty\sum_{n=1}^\infty\e^{-\la_n\tau}\vp_n(\bm x)\vp_n(\bm y)\phi_\al(t,\tau)\,\rd\tau\\
& =\int_0^\infty K_1(\bm x,\bm y,\tau)\phi_\al(t,\tau)\,\rd\tau.
\end{align*}
Here, since $K_1(\bm x,\bm y,\tau)>0$ and $\phi_\al(t,\tau)\ge0,\not\equiv0$, again Tonelli's theorem enables the interchange of the order of integration. This complete the proof.
\end{proof}

We now exploit the strict positivity of Green's function for the homogeneous problem established above to prove the strict positivity of the solution.

\begin{theorem}\label{thm:SMP(F=0)}
Let $F=0$ and $a\in D(\cA^\be)$ with $\be>d/4-1$. If $a\ge0,\not\equiv0$ in $\Om,$ then the solution $u$ to \eqref{Eq:IBVP} satisfies $u>0$ in $\Om\times(0,T)$.
\end{theorem}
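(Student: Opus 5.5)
We follow the same route as the proof of Theorem~\ref{thm:SMP(a=0)}, with Theorem~\ref{thm:2} in place of Theorem~\ref{thm:1}. First we write the solution through the kernel $K_\al$. By Lemma~\ref{Lem:scalar_well-posedness}(i), for each $t\in(0,T)$,
\[
u(\bm x,t)=\sum_{n=1}^\infty E_{\al,1}(-\la_n t^\al)(a,\vp_n)\vp_n(\bm x)=\int_\Om K_\al(\bm x,\bm y,t)a(\bm y)\,\rd\bm y .
\]
The interchange of sum and integral is justified as follows. For fixed $t>0$ we have $|E_{\al,1}(-\la_n t^\al)|\le C/(1+\la_n t^\al)$, so $K_\al(\cdot,\cdot,t)\in L^2(\Om\times\Om)$ because $\sum_n\la_n^{-2}<\infty$ for $d\le3$. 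In general dimensions we instead use the assumption $\be>d/4-1$: it gives $u(t)\in D(\cA^{\be+1})\subset C(\overline\Om)$ by Sobolev embedding ($2\be+2>d/2$). Hence the series converges in $L^2(\Om)$, the pairing with $a$ is legitimate, and $u(\cdot,t)$ is a genuine continuous function, so pointwise positivity makes sense.

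The second step is the positivity argument. Since $a\ge0$ and $a\not\equiv0$, there is a set $E\subset\Om$ of positive measure with $a>0$ on $E$. Fix $t\in(0,T)$. Theorem~\ref{thm:2} gives $K_\al(\bm x,\bm y,t)>0$ for a.e.\ $(\bm x,\bm y)$. By Fubini, for a.e.\ $\bm x\in\Om$ the function $\bm y\mapsto K_\al(\bm x,\bm y,t)$ is positive for a.e.\ $\bm y\in E$. Therefore
\[
u(\bm x,t)\ge\int_E K_\al(\bm x,\bm y,t)a(\bm y)\,\rd\bm y>0,
\]
since the integrand is a.e.\ positive on a set of positive measure. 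This proves $u>0$ a.e.\ in $\Om\times(0,T)$.

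The main obstacle is that Theorem~\ref{thm:2} is only an a.e.\ statement in $(\bm x,\bm y,t)$, while we want the conclusion for every $t\in(0,T)$. We handle this through the subordination formula from its proof,
\[
K_\al(\bm x,\bm y,t)=\int_0^\infty K_1(\bm x,\bm y,\tau)\,t^{-\al}M_\al(\tau t^{-\al})\,\rd\tau .
\]
For each fixed $t$ the weight is nonnegative and not identically zero, and $K_1(\cdot,\cdot,\tau)>0$ for every $\tau>0$. Hence positivity holds for every $t$ and a.e.\ $(\bm x,\bm y)$. Continuity of $u(\cdot,t)$, guaranteed by $\be>d/4-1$, then upgrades a.e.\ positivity in $\bm x$ to positivity at every interior point, because a continuous function is determined by its a.e.\ values. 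Strictly, that last step yields only $u\ge0$ everywhere together with $u>0$ a.e. If full pointwise positivity at every $\bm x$ is required, we redo the estimate at each fixed $\bm x$: the heat kernel $K_1(\bm x,\cdot,\tau)$ is continuous and positive in the interior for every fixed $\bm x$, so the lower bound holds at every $\bm x\in\Om$.
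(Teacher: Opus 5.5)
Your proposal is correct and follows essentially the paper's route: represent $u(\cdot,t)=\int_\Om K_\al(\cdot,\bm y,t)a(\bm y)\,\rd\bm y$ via Lemma~\ref{Lem:scalar_well-posedness}(i), then conclude from the positivity of $K_\al$ in Theorem~\ref{thm:2}. Your use of the subordination formula to get positivity for \emph{every} fixed $t$, and of the set $\{a>0\}$ to get a strict lower bound, is care that the paper's two-line proof omits; the remaining loose spots are minor. The kernel representation for $d\ge4$ is only sketched, though it follows directly from $u(t)=\int_0^\infty \e^{-\tau\cA}a\,t^{-\al}M_\al(\tau t^{-\al})\,\rd\tau$ and Tonelli, with no condition on $\be$. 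Your optional upgrade to every $\bm x$ also needs $u(\bm x,t)\ge\int_\Om K_\al(\bm x,\bm y,t)a(\bm y)\,\rd\bm y$ at every point, which holds because the right-hand side is lower semicontinuous (Fatou) and agrees a.e.\ with the continuous $u(\cdot,t)$.
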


\begin{proof}
By the definition of $K_\al$ and Lemma~\ref{Lem:scalar_well-posedness}(i), the solution to \eqref{Eq:IBVP} admits the representation
\[
u(\bm x,t)=\int_\Om K_\al(\bm x,\bm y,t)a(\bm y)\,\rd \bm y.
\]
From Theorem~\ref{thm:2}, we have $K_\al>0$ a.e.\! in $\Om\times\Om\times(0,T)$. Hence, it follows that
\[
u(\bm x,t)>0,\quad \bm x\in\Om,\ t>0.
\]
This completes the proof.
\end{proof}

As a direct combination of Theorem~\ref{thm:SMP(a=0)} and Theorem~\ref{thm:SMP(F=0)}, we obtain the following corollary which completes this section by addressing the general case.

\begin{corollary}\label{coro:strict-positive}
Let $a\in D(\cA^\be)$ and $F\in L^2(0,T;D(\cA^\be))$ with $\be>d/4-1$. Assume $a\ge0,F\ge0$ and at least one of them does not vanish identically. Define
\[
t_*:=\begin{cases}
0,\quad & \mbox{if }a\not\equiv0,\\
\inf\{t\ge0\mid F(\,\cdot\,,t)\not\equiv0\mbox{ in }\Om\}, & \mbox{otherwise},
\end{cases}
\]
which represents the first activation time of the equation.
Then the solution $u$ to \eqref{Eq:IBVP} satisfies $u > 0$ in $\Om\times (t_*,T)$.
\end{corollary}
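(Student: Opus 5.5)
The plan is to use linearity of \eqref{Eq:IBVP} to split the solution as $u=u_1+u_2$. Here $u_1$ solves the homogeneous problem with initial value $a$ and $F\equiv0$. The second part $u_2$ solves the inhomogeneous problem with zero initial value and source $F$. Both pieces are well defined by Lemma~\ref{Lem:scalar_well-posedness}(i)--(ii) under the stated regularity $a\in D(\cA^\be)$ and $F\in L^2(0,T;D(\cA^\be))$. By uniqueness, their sum is the solution $u$. Since $a\ge0$ and $F\ge0$, Lemma~\ref{Lem:scalar_well-posedness}(iii), applied to each piece separately (i.e.\ with $F=0$ or $a=0$), gives $u_1\ge0$ and $u_2\ge0$ in $\Om\times(0,T)$.

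We then distinguish the two cases in the definition of $t_*$.

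\emph{Case $a\not\equiv0$, so $t_*=0$.} Theorem~\ref{thm:SMP(F=0)} yields $u_1>0$ in $\Om\times(0,T)$. Together with $u_2\ge0$, this gives $u=u_1+u_2>0$ in $\Om\times(0,T)$. Note that $F$ may or may not vanish here; this is irrelevant.

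\emph{Case $a\equiv0$.} Then $F\ge0,\not\equiv0$ by assumption, $u_1\equiv0$, and $t_*$ coincides with $t_0$ in Theorem~\ref{thm:SMP(a=0)}. That theorem gives $u=u_2>0$ in $\Om\times(t_*,T)$.

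The only point requiring care is the meaning of ``$>0$'' and ``$\ge0$'' as pointwise a.e.\ statements. The sum of an a.e.\ strictly positive function and an a.e.\ nonnegative function is a.e.\ strictly positive, because the union of the two exceptional null sets is null. The regularity assumption $\be>d/4-1$ is exactly what Theorems~\ref{thm:SMP(a=0)} and \ref{thm:SMP(F=0)} require, so no additional obstacle arises. I expect this bookkeeping to be the only substantive step; otherwise the corollary is a direct superposition argument.
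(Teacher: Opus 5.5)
Your proposal is correct and uses the same superposition argument as the paper: split $u$ into the homogeneous part and the zero-initial-value part, then invoke Theorems~\ref{thm:SMP(F=0)} and \ref{thm:SMP(a=0)}. Your case split also makes precise a point the paper glosses over: the paper asserts that \emph{each} part is strictly positive, which fails if one datum vanishes, or for $u_2$ on $(0,t_0]$ when $a\not\equiv0$, and you handle this by using the weak positivity of Lemma~\ref{Lem:scalar_well-posedness}(iii) for the piece that need not be strictly positive.
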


\begin{proof}
By the linearity of the governing equation and superposition principle, the solution to \eqref{Eq:IBVP} can be decomposed into $u=u_1+u_2$ where $u_1,u_2$ are the solutions to the homogeneous and inhomogeneous problems, respectively. Since each part is strictly positive under the non-negativity assumption of Theorem \ref{thm:SMP(a=0)} and Theorem \ref{thm:SMP(F=0)}, their sum remains strictly positive.
\end{proof}


\section{Strict Positivity for Coupled System}\label{sec-couple}

We now generalize the strict positivity properties obtained for single equations in the above section to coupled systems \eqref{Eq:Coupled_prototype_matrix}.

To address Problem~\ref{prob:coupled} in a systematic manner, the central task is to elucidate the role of coupling in the propagation of strict positivity. To this end, we employ Theorem~\ref{thm:coupled_F=0} and Theorem~\ref{thm:time-propagation} to resolve these two problems respectively. First and foremost, a clear understanding of these results requires an assumption on the connectivity of the coupled system. This is essential because in the coupled system under our consideration not every equation is equipped with a nonzero source term or nontrivial initial data. For any equation whose initial data and source term both vanish identically, the corresponding solution is always trivial. However, the key point of interest lies precisely in such equations: {\it if there exists a directed path from an equation with a nonzero solution to one of these trivial equations, then the latter will also become nontrivial under the influence of the coupling}. An even more intriguing question concerns {\it the time at which such activation occurs}. For the homogeneous problem in cooperative coupled systems, there exists a unified “activation time”, whereas for the inhomogeneous problem the activation times may differ across components.

To clarify these phenomena, we begin with Proposition~\ref{prop:coupled} where we establish a connection between the matrix formulation and the index-set representation of connectivity thereby capturing the underlying interaction structure.

\begin{proposition}\label{prop:coupled}
Let $\bm a$ be the initial value and $\bm C$ the coefficient matrix in \eqref{Eq:Coupled_prototype_matrix}. Define a vector $\bm r=(r_1,\dots,r_K)^\T\in \BR^K$ and a matrix $\bm Q=(q_{k\ell})_{1\le k,\ell\le K}\in\BR^{K\times K}$ by
\[
r_k=\begin{cases}
1, & \mbox{if }a_k\not\equiv0\mbox{ in }\Om,\\
0, & \mbox{otherwise},
\end{cases}\quad q_{kk}=1,\quad q_{k\ell}=\begin{cases}
1, & \mbox{if }c_{k\ell}\not\equiv0\mbox{ in }\Om,\\
0, & \mbox{otherwise},
\end{cases}
\quad k\ne\ell.
\]
Define the influence sets by
\begin{equation}\label{Eq:prop_Im}
I_m:=\begin{cases}
\{k\mid a_k\not\equiv0\mbox{ in }\Om\}, & m=0,\\
I_{m-1}\cup\{k\mid\exists\,\ell\in I_{m-1}\mbox{ such that }c_{k\ell}\not\equiv0\mbox{ in }\Om\}, & m=1,2,\dots.
\end{cases}
\end{equation}
Then for all $m=0,1,\dots$ and $k=1,\dots,K,$ we have $k\in I_m$ if and only if $(\bm Q^m\bm r)_k>0$.
\end{proposition}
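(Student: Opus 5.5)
We argue by induction on $m$, relying only on the fact that $\bm Q$ and $\bm r$ have entries in $\{0,1\}$. This implies that every vector $\bm Q^m\bm r$ is entrywise nonnegative. The property we track is therefore the \emph{support} of $\bm Q^m\bm r$, namely the set of indices $k$ with $(\bm Q^m\bm r)_k>0$. The claim is that this support coincides with $I_m$ for every $m$.

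The base case $m=0$ is immediate. Since $\bm Q^0\bm r=\bm r$, we have $(\bm r)_k>0$ if and only if $r_k=1$, and $r_k=1$ exactly when $a_k\not\equiv0$ in $\Om$, that is, when $k\in I_0$.

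For the inductive step, assume that for some $m\ge1$ the support of $\bm v:=\bm Q^{m-1}\bm r$ equals $I_{m-1}$, and that $\bm v\ge\bm0$. We write
\[
(\bm Q^m\bm r)_k=(\bm Q\bm v)_k=q_{kk}v_k+\sum_{\ell\ne k}q_{k\ell}v_\ell=v_k+\sum_{\ell\ne k}q_{k\ell}v_\ell .
\]
Every term in this sum is nonnegative, so the left-hand side is positive if and only if at least one term is positive. This happens exactly when either $v_k>0$, or there is some $\ell\ne k$ with $q_{k\ell}=1$ and $v_\ell>0$. By the induction hypothesis, $v_k>0$ means $k\in I_{m-1}$. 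The second alternative means that $c_{k\ell}\not\equiv0$ in $\Om$ for some $\ell\in I_{m-1}$ with $\ell\ne k$.

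It remains to compare this with the definition \eqref{Eq:prop_Im}. There, membership of $k$ in $I_m\setminus I_{m-1}$ allows any $\ell\in I_{m-1}$ with $c_{k\ell}\not\equiv0$, including possibly $\ell=k$. The case $\ell=k$ does not create a discrepancy, because $\ell=k\in I_{m-1}$ already puts $k$ in $I_{m-1}$. So both conditions describe the same set, and the support of $\bm Q^m\bm r$ is $I_m$. Nonnegativity of $\bm Q^m\bm r$ also carries over to the next step, which closes the induction.

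The only point that needs care is this diagonal convention. We set $q_{kk}=1$ regardless of whether $c_{kk}\not\equiv0$, and the role of this choice is to encode the monotone union $I_{m-1}\subset I_m$. Once that is checked, the rest is bookkeeping. No positivity of the PDE solution is used anywhere: the statement is purely combinatorial.
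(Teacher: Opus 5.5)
Your proof is correct and follows the same induction on $m$ as the paper: since $\bm Q^m\bm r\ge\bm0$, the entry $(\bm Q^{m+1}\bm r)_k$ is positive exactly when some single term $q_{k\ell}(\bm Q^m\bm r)_\ell$ is positive. Your explicit handling of the diagonal convention $q_{kk}=1$ is more careful than the paper's write-up, which identifies $q_{k\ell}=1$ with $c_{k\ell}\not\equiv0$ even for $\ell=k$ and omits the union term $I_{m}\subset I_{m+1}$; your version covers both the case $k\in I_m$ and the case $\ell=k$ with $c_{kk}\equiv0$.
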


\begin{remark}\label{rem:prop}
\begin{enumerate}
\item We present two equivalent ways to describe the connectivity induced by nonzero initial data. For clarity, we establish the equivalence in the case of nonzero initial data while the argument for source terms at a fixed time is analogous.
\item The necessity of Proposition~\ref{prop:coupled} lies in its role in Theorem~\ref{thm:time-propagation}. In that theorem, the index-set formulation is essential for characterizing the influence region of a single nonzero component. This characterization enables us to analyze the time propagation of strict positivity in the coupled system by means of Picard iteration and the superposition principle. 
\item We interpret $I_0$ in \eqref{Eq:prop_Im} as the index set of components with nonzero initial data, i.e., it contains all indexes corresponding to nontrivial initial conditions in the coupled system. For $m=1,2,\dots$, the set $I_m$ defined by \eqref{Eq:prop_Im} consists of all indexes that can be reached through nonzero coupling terms at the $m$-th step. From a propagation perspective, this means at step $m$ each index $\ell$ transmits its “influence” to all indexes $k$ connected to it through nonzero coupling. Therefore, \eqref{Eq:prop_Im} characterizes the set of components activated by the previous step.
\end{enumerate}
\end{remark}

\begin{example}
Before proceeding to the prove of Proposition~\ref{prop:coupled}, we first use Figure~\ref{fig-graph} to illustrate the notations and the cooperative condition \eqref{eq-coop}, as well as to explain how the Picard sequence is employed at each step of the induction. 

Let $K=5$ and assume that the initial condition vector $\bm a$ and the coupling matrix $\bm C$ take forms of
\[
    \bm a=\begin{bmatrix}
        0\\
        \bullet\\
        0\\
        0\\
        0
    \end{bmatrix},\quad 
    \bm C=\begin{bmatrix}
    \times & *      & 0      & *      & 0 \\
    *      & \times & 0      & *      & 0 \\
    *      & 0      & \times & 0      & 0 \\
    0      & *      & *      & \times & * \\
    0      & 0      & *      & 0      & \times
    \end{bmatrix}.
\]
Here $\bullet$ denotes a nontrivial component in the initial value, and $\times$ represents an arbitrary (trivial or nontrivial) diagonal entry in $\bm C$, since no specific assumptions on $c_{kk}$ ($k=1,\dots,K$). Meanwhile, $*$ indicates a non-positive and nontrivial coefficient.

On the other hand, according to the definitions in Proposition~\ref{prop:coupled}, the vector $\bm r$ and the matrix $\bm Q$ are defined as
\[
    \bm r=\begin{bmatrix}
        0\\
        1\\
        0\\
        0\\
        0
    \end{bmatrix}, \quad 
    \bm Q=\begin{bmatrix}
    1 & 1 & 0 & 1 & 0 \\
    1 & 1 & 0 & 1 & 0 \\
    1 & 0 & 1 & 0 & 0 \\
    0 & 1 & 1 & 1 & 1 \\
    0 & 0 & 1 & 0 & 1
    \end{bmatrix}.
\]
Further, the corresponding influence sets $I_k$ at each iteration step are given as follows:
\[
I_0=\{2\},\quad I_1=\{1,2,4\},\quad I_2=\{1,2,3,4\},\quad I_3=\{1,2,3,4,5\}.
\]
\begin{figure}[htbp]
\centering
\begin{subfigure}[b]{0.49\textwidth}
\centering
\begin{tikzpicture}[scale=0.75]
\node[base] (N1) at (90:2.5) {1};
\node[active_node] (N2) at (18:2.5) {2};
\node[base] (N4) at (306:2.5) {4};
\node[base] (N5) at (234:2.5) {5};
\node[base] (N3) at (162:2.5) {3};
\draw[arrow] (N1) to[bend left=15] (N2); \draw[arrow] (N2) to[bend left=15] (N1);
\draw[arrow] (N2) to[bend left=15] (N4); \draw[arrow] (N4) to[bend left=15] (N2);
\draw[arrow] (N1) to (N3); \draw[arrow] (N3) to (N5);
\draw[arrow] (N5) to (N4); \draw[arrow] (N3) to (N4);
\end{tikzpicture}
\caption{Step 1: Active source at node 2}
\end{subfigure}
\hfill
\begin{subfigure}[b]{0.49\textwidth}
\centering
\begin{tikzpicture}[scale=0.75]
\node[active_node] (N1) at (90:2.5) {1};
\node[active_node] (N2) at (18:2.5) {2};
\node[active_node] (N4) at (306:2.5) {4};
\node[base] (N5) at (234:2.5) {5};
\node[base] (N3) at (162:2.5) {3};
\draw[arrow] (N1) to[bend left=15] (N2); \draw[arrow] (N2) to[bend left=15] (N1);
\draw[arrow] (N2) to[bend left=15] (N4); \draw[arrow] (N4) to[bend left=15] (N2);
\draw[arrow] (N1) to (N3); \draw[arrow] (N3) to (N5);
\draw[arrow] (N5) to (N4); \draw[arrow] (N3) to (N4);
\end{tikzpicture}
\caption{Step 2: Activation spreads}
\end{subfigure}\vspace{5mm}
\begin{subfigure}[b]{0.49\textwidth}
\centering
\begin{tikzpicture}[scale=0.75]
\node[active_node] (N1) at (90:2.5) {1};
\node[active_node] (N2) at (18:2.5) {2};
\node[active_node] (N4) at (306:2.5) {4};
\node[base] (N5) at (234:2.5) {5};
\node[active_node] (N3) at (162:2.5) {3};
\draw[arrow] (N1) to[bend left=15] (N2); \draw[arrow] (N2) to[bend left=15] (N1);
\draw[arrow] (N2) to[bend left=15] (N4); \draw[arrow] (N4) to[bend left=15] (N2);
\draw[arrow] (N1) to (N3); \draw[arrow] (N3) to (N5);
\draw[arrow] (N5) to (N4); \draw[arrow] (N3) to (N4);
\end{tikzpicture}
\caption{Step 3: Reach node 3}
\end{subfigure}
\hfill
\begin{subfigure}[b]{0.49\textwidth}
\centering
\begin{tikzpicture}[scale=0.75]
\node[active_node] (N1) at (90:2.5) {1};
\node[active_node] (N2) at (18:2.5) {2};
\node[active_node] (N4) at (306:2.5) {4};
\node[active_node] (N5) at (234:2.5) {5};
\node[active_node] (N3) at (162:2.5) {3};
\draw[arrow] (N1) to[bend left=15] (N2); \draw[arrow] (N2) to[bend left=15] (N1);
\draw[arrow] (N2) to[bend left=15] (N4); \draw[arrow] (N4) to[bend left=15] (N2);
\draw[arrow] (N1) to (N3); \draw[arrow] (N3) to (N5);
\draw[arrow] (N5) to (N4); \draw[arrow] (N3) to (N4);
\end{tikzpicture}
\caption{Step 4: Full activation}
\end{subfigure}
\caption{Four stages of the connectivity proof.}\label{fig-graph}
\end{figure}
\end{example}

\begin{proof}
We proceed by induction on $m$. For $m=0$, it follows immediately from the definition of $\bm r$ that $r_k=1$ if and only if $k\in I_0$.

Now assume that for some $m=0,1,\dots$, $k\in I_m$ if and only if $(\bm Q^m\bm r)_k>0$. We prove the statement for $m+1$. Suppose that $k\in I_{m+1}$, that is, there exists $\ell\in I_m$ such that $c_{k\ell}\not\equiv0$. By the definition of $\bm Q$, it is equivalent to $q_{k\ell}=1$. Then by the induction hypothesis, we see $(\bm Q^m\bm r)_\ell>0$ and hence
\begin{equation}\label{eq-Qrk}
(\bm Q^{m+1}\bm r)_k=\sum_{\ell=1}^K q_{k\ell}(\bm Q^m\bm r)_\ell>0.
\end{equation}
Conversely, suppose that \eqref{eq-Qrk} holds true for some $k$. Since each term in the sum is nonnegative, there exists $\ell$ such that
\[
q_{k\ell}(\bm Q^m\bm r)_\ell>0.
\]
Then there should hold $q_{k\ell}=1$ and $(\bm Q^m\bm r)_\ell>0$. By the induction hypothesis, we have $\ell\in I_m$. Moreover, $q_{k\ell}=1$ is equivalent to $c_{k\ell}\not\equiv0$ and thus $k\in I_{m+1}$.
\end{proof}

With the above preparations, we are now ready to establish the propagation of positivity in the coupled system. We begin by considering the case of homogeneous problem.

\begin{theorem}\label{thm:coupled_F=0}
Let $\bm F=\bm0,$ $\bm a\in D(\cA^\be)$ with $\be>d/4-1,$ and $\bm C\in L^\infty(\Om)$ satisfy the cooperativeness condition \eqref{eq-coop}. Let $I_m$ be the influence sets defined in \eqref{Eq:prop_Im} and assume
\begin{equation}\label{Eq:connection_3}
\lim_{m\to\infty}I_m=\{1,\dots,K\}.
\end{equation}
If $\bm a\ge\bm0,\not\equiv\bm0$ in $\Om,$ then the solution $\bm u$ to \eqref{Eq:Coupled_prototype_matrix} satisfies $\bm u>\bm0$ in $\Om\times(0,T)$.
\end{theorem}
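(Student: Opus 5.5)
We argue by induction over the influence sets $I_m$ of Proposition~\ref{prop:coupled}, decoupling the system so that each component solves a scalar problem with a nonnegative source. The first step removes the sign-indefinite diagonal coupling. Since $\bm C\in L^\infty(\Om)$, pick a constant $M\ge\max_k\|c_{kk}\|_{L^\infty(\Om)}$. We need each $\wt\cA_k:=\cA_k+c_{kk}+M$ to have a zeroth-order coefficient $c_k+c_{kk}+M\ge0$, so that Theorems~\ref{thm:2}, \ref{thm:1} and Corollary~\ref{coro:strict-positive} apply to it. The shift by $M$ is not free in the fractional setting; it is harmless only if positivity statements for $\pa_t^{\al_k}+\wt\cA_k-M$ still follow. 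We therefore instead keep $c_{kk}$ in the operator and check that the Wright-function/heat-kernel argument of Theorems~\ref{thm:1}, \ref{thm:2} only uses strict positivity of the Dirichlet heat kernel $\sum_n \e^{-\la_n\tau}\vp_n(\bm x)\vp_n(\bm y)$. That positivity holds for any bounded zeroth-order coefficient, even sign-changing, by the classical strong maximum principle after the substitution $\e^{M\tau}$. The expansion requires $\la_n>-\infty$, which is automatic, and the Laplace representation $1/(s^\al+\la_n)$ must be handled for possibly negative $\la_n$. For finitely many negative $\la_n$ we will use the same subordination formula with $\e^{-\la_n\tau}$ growing, which still converges since $M_\al$ decays super-exponentially. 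This is the technical point I expect to be the main obstacle, and I would settle it first.

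With this, the $k$-th equation reads
\[
\pa_t^{\al_k}(u_k-a_k)+(\cA_k+c_{kk})u_k=-\sum_{\ell\ne k}c_{k\ell}u_\ell=:f_k .
\]
By Lemma~\ref{Lem:vector_well-posedness}(iii) we have $\bm u\ge\bm0$, and by cooperativeness~\eqref{eq-coop} it follows that $f_k\ge0$. The regularity $\be>d/4-1$, together with Lemma~\ref{Lem:vector_well-posedness}(i), puts $f_k$ in a space where the representation through $K_{\al_k},G_{\al_k}$ holds. Hence
\[
u_k(t)=\int_\Om K_{\al_k}(\cdot,\bm y,t)a_k(\bm y)\,\rd\bm y+\int_0^t\!\!\int_\Om G_{\al_k}(\cdot,\bm y,t-s)f_k(\bm y,s)\,\rd\bm y\,\rd s ,
\]
and both terms are nonnegative.

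The induction claim is that $u_k>0$ in $\Om\times(0,T)$ for every $k\in I_m$. For $m=0$ we have $a_k\ge0,\not\equiv0$, so the first term is strictly positive by Theorem~\ref{thm:SMP(F=0)} (the $c_{kk}$-version). For the step $m\to m+1$, take $k\in I_{m+1}\setminus I_m$. There is then some $\ell\in I_m$ with $c_{k\ell}\le0$ and $c_{k\ell}\not\equiv0$, and by the induction hypothesis $u_\ell>0$ on $\Om\times(0,T)$. Consequently $f_k\ge -c_{k\ell}u_\ell$ is positive on a set $\{c_{k\ell}<0\}\times(0,T)$ of positive measure. Because this set reaches down to $s=0$, the activation time of $f_k$ is $t_0=0$, and Theorem~\ref{thm:SMP(a=0)} gives $u_k>0$ on $\Om\times(0,T)$. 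No time delay occurs, which is why the homogeneous case has the single activation time $0$.

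Since the sets $I_m$ increase inside a finite set, assumption~\eqref{Eq:connection_3} gives $I_m=\{1,\dots,K\}$ for some finite $m$. This yields $\bm u>\bm0$ in $\Om\times(0,T)$.
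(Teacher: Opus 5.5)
Your proof is correct, but it differs from the paper's in two respects.

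\textbf{Working on $\bm u$ directly.} The paper rewrites the system as $\pa_t^{\bm\al}(\bm u-\bm a)+\wt\BA\bm u=\bm B\bm u$, with $\wt\cA_k=\cA_k+\|c_{kk}\|_{L^\infty(\Om)}$ and $\bm B=\bm\Sigma-\bm C\ge\bm O$. It then runs the Picard iteration $\bm u^{(m)}$, which converges by \cite{LHL23} and is non-decreasing by the argument of \cite{BL26}. It shows by induction that $u_k^{(m+1)}>0$ for $k\in I_m$, and concludes from $\bm u\ge\bm u^{(M)}$. You instead take $\bm u\ge\bm0$ from Lemma~\ref{Lem:vector_well-posedness}(iii) and freeze $f_k=-\sum_{\ell\ne k}c_{k\ell}u_\ell\ge0$ as a given source. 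You then run the same induction over $I_m$ on the components of $\bm u$ itself. This is shorter and needs no convergence or monotonicity of iterates. The paper's version, in exchange, gives an explicit lower bound $\bm u^{(M)}$ built from finitely many decoupled scalar problems.

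\textbf{The diagonal coupling.} You extend Theorems~\ref{thm:1} and \ref{thm:2} to $\cA_k+c_{kk}$ with an $L^\infty$, possibly sign-changing potential and finitely many nonpositive eigenvalues. That extension is sound. The Dirichlet heat kernel stays strictly positive for bounded potentials. Also $\int_0^\infty\e^{|\la|\tau}\phi_\al(t,\tau)\,\rd\tau<\infty$, since $M_\al(r)$ decays like $\exp(-c\,r^{1/(1-\al)})$ up to an algebraic factor. It yields a more general scalar result, but it is unnecessary.

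Your worry that ``the shift by $M$ is not free'' is misplaced. No substitution of the type $\e^{Mt}u$ is involved; you simply write
\[
\pa_t^{\al_k}(u_k-a_k)+(\cA_k+M)u_k=f_k+(M-c_{kk})u_k .
\]
The right-hand side is $\ge f_k\ge0$ precisely because you already know $u_k\ge0$. Since $M$ is a constant, the zeroth-order coefficient of $\cA_k+M$ remains continuous and nonnegative, so the paper's scalar theorems apply verbatim. This is exactly what the paper's $\bm\Sigma$ and $\bm B$ accomplish, so the step you identified as the main obstacle can be dropped.
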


\begin{remark}\label{Rem:thm_coupled_F=0}
The assumptions in the above theorem can be divided into two categories. First, the condition \eqref{eq-coop} requires that all inter-component interactions are nonnegative excluding self-effects, thereby ensuring the cooperative nature of the system. Next, \eqref{Eq:connection_3} requires the connectivity throughout all components of the coupled system. In fact, the introduction of influence sets \eqref{Eq:prop_Im} was explained in Remark~\ref{rem:prop}, and condition \eqref{Eq:connection_3} further ensures that this propagation reaches the entire system in finite steps.
\end{remark}

\begin{proof}
The proof of this theorem is divided into two steps. In the first step, we follow the proof of \cite[Theorem 2]{BL26} to construct a specified Picard iteration sequence converging to the mild solution to \eqref{Eq:Coupled_prototype_matrix} and establish its monotonicity. On the basis of these existing results, we prove that the Picard sequence is non-decreasing. In the second step, we apply Theorem~\ref{thm:SMP(a=0)} to conclude the strict positivity of the solution.

First of all, we rewrite \eqref{Eq:Coupled_prototype_matrix} into a slightly different form. Let
\[
\bm\Sigma:=\diag(\|c_{11}\|_{L^\infty(\Om)},\dots,\|c_{KK}\|_{L^\infty(\Om)}),\quad\bm B:=\bm\Sigma-\bm C=\begin{cases}
-c_{k\ell}, & k\ne\ell,\\
\|c_{kk}\|_{L^\infty(\Om)}-c_{kk}, & k=\ell,
\end{cases}
\]
and define
\[
\wt\cA_k:=\cA_k+\|c_{kk}\|_{L^\infty(\Om)},\quad\wt\BA:=\BA+\bm\Sigma=\diag\left(\wt\cA_1,\dots,\wt\cA_K\right),\quad k=1,\dots,K.
\]
Owing to the cooperativeness condition \eqref{eq-coop} on $\bm C$, we know that $\bm B$ is a nonnegative matrix, i.e., $\bm B\ge\bm O$. Here we emphasize that in the sequel, since the propagation at each step proceeds from some $\ell\in I_m$ to $k\in I_{m+1}$ according to \eqref{Eq:prop_Im}, the use of $\bm B$ instead of $-\bm C$ does not affect the induction argument. Now we can reformulate \eqref{Eq:Coupled_prototype_matrix} as
\[
\begin{cases}
\pa_t^{\bm\al}(\bm u-\bm a)+\wt\BA\bm u=\bm B\bm u\quad & \mbox{in }\Om\times(0,T),\\
\bm u=\bm0 & \mbox{on }\pa\Om\times(0,T),
\end{cases}
\]
where $\wt\BA$ shares the same spectral property as that of $\BA$. 

Next, we construct a sequence $\{\bm u^{(m)}\}_{m=0}^\infty$ convergent to $\bm u$ by $\bm u^{(0)}=\bm0$ and
\begin{equation}\label{Eq:process1}
\begin{cases}
\pa_t^{\bm\al}(\bm u^{(m)}-\bm a)+\wt\BA\bm u^{(m)}=\bm B\bm u^{(m-1)} & \mbox{in }\Om\times(0,T),\\
\bm u^{(m)}=\bm0 & \mbox{on }\pa\Om\times(0,T),
\end{cases}\quad m=1,2,\dots.
\end{equation}
The convergence of this sequence to the unique mild solution of \eqref{Eq:Coupled_prototype_matrix} is guaranteed by \cite{LHL23}, and the proof of \cite[Proposition 2]{BL26} implies that $\{\bm u^{(m)}\}$ is a non-decreasing sequence. Especially, we know that the mild solution $\bm u\ge\bm u^{(m)}$ for any $m\in\BN$.

Now we aim at showing by induction on $m$ that
\begin{equation}\label{eq-assert}
\forall\,m=0,1,\dots,\ \forall\,k\in I_m,\quad u_k^{(m+1)}>0\quad\mbox{in }\Om\times(0,T).
\end{equation}
For any $k\in I_0$, we have $a_k\ge0,\not\equiv0$ in $\Om$ by definition, and $u_k^{(1)}$ satisfies
\[
\begin{cases}
\pa_t^{\al_k}(u_k^{(1)}-a_k)+\wt \cA_k u_k^{(1)}=0 & \mbox{in }\Om\times(0,T),\\
u_k^{(1)}=0 & \mbox{on }\pa\Om\times(0,T).
\end{cases}
\]
Then it follows directly from Theorem \ref{thm:SMP(F=0)} that $u_k^{(1)}>0$ in $\Om\times(0,T)$.

Now we assume that the assertion \eqref{eq-assert} holds true for some $m-1=0,1,\dots$. For any $k\in I_m$, we divide into two cases, namely $k\in I_m\cap I_{m-1}$ and $k\in I_m\setminus I_{m-1}$. In the case of $k\in I_m\cap I_{m-1}$, it follows from the induction assumption that $u_k^{(m)}>0$ in $\Om\times(0,T)$. Then the monotonicity of $\{\bm u^{(m)}\}$ indicates $u_k^{(m+1)}\ge u_k^{(m)}>0$ in $\Om\times(0,T)$.

In the case of $k\in I_m\setminus I_{m-1}$, first we know from definition \eqref{Eq:prop_Im} that there exists $\ell\in I_{m-1}$ such that $c_{k\ell}\not\equiv0$ in $\Om$. For such an $\ell$, the induction assumption asserts $u_\ell^{(m)}>0$ in $\Om\times(0,T)$. On the other hand, since $k\notin I_{m-1}$, we have $k\ne\ell$ and hence $c_{k\ell}\le0,\not\equiv0$ in $\Om$ according to \eqref{eq-coop}. Then by the definition of $\bm B$, we obtain
\[
(\bm B\bm u^{(m)})_k=\sum_{\ell=1}^K b_{k\ell}u_\ell^{(m)}\ge-c_{k\ell}u_\ell^{(m)}\ge0,\not\equiv0\quad\mbox{in }\Om\times(0,T),
\]
and $\inf\{t\ge0\mid(\bm B\bm u^{(m)})_k\not\equiv0\mbox{ in }\Om\}=0$. Consequently, Theorem \ref{thm:SMP(a=0)} claims that the solution to the inhomogeneous problem
\[
\begin{cases}
(\pa_t^{\al_k}+\wt \cA_k)u_k^{(m+1)}=(\bm B\bm u^{(m)})_k & \mbox{in }\Om\times(0,T),\\
u_k^{(m+1)}=0 & \mbox{on }\pa\Om\times(0,T)
\end{cases}
\]
satisfies $u_k^{(m+1)}>0$ in $\Om\times(0,T)$. This completes the verification of assertion \eqref{eq-assert}.

Finally, thanks to the connectivity condition \eqref{Eq:connection_3}, each index $k$ belongs to $I_{m_k}$ for some $m_k=0,1,\dots$. By the induction result above, it follows that
\[
u_k^{(m_k+1)}>0\quad\mbox{in }\Om\times(0,T),\ \forall\,k=1,\dots,K.
\]
Setting $M:=\max_{1\le k\le K}m_k+1$ and employing the monotone convergence of $\{\bm u^{(m)}\}$ to the mild solution $\bm u$, we conclude
\[
\bm u\ge\bm u^{(M)}>\bm0\quad\mbox{in }\Om\times(0,T),
\]
which completes the proof.
\end{proof}

Finally, we present the following theorem to answer the inhomogeneous part of Problem~\ref{prob:coupled} which concerns the time-propagation behavior induced by source terms. Before stating the result, we briefly highlight an essential difference between the homogeneous problem and the inhomogeneous problem which directly influences the connectivity assumptions in the theorem.

For the homogeneous problem, the activation occurs simultaneously at $t=0$. Therefore, the definition of $I_0$ does not depend on a specific starting component and $I_0$ need not be a singleton. Under assumption \eqref{Eq:connection_3}, each equation is connected to at least one component with nonvanishing initial data thus receives the propagated influence. Since all components share the same activation time namely $t=0$, the entire system becomes nontrivial from the initial moment onward.

In contrast, the inhomogeneous problem is substantially more intricate. Each component may possess a source term that becomes active at a different time. For instance, the $k$-th equation may have a source term that becomes nontrivial only after some time $t_k$ while it is coupled with another equation $l$ whose source term is activated at a different time $t_l$, and so on. The interaction of such time-dependent sources across the coupled system leads to a more complex propagation mechanism.

To handle this difficulty, we adopt a decomposition strategy based on the superposition principle. Specifically, the original system is decomposed into $K$ subsystems, each of which contains a single source term. For each subsystem associated with a given index $l$, we impose a corresponding connectivity assumption meaning that the index set $I_0$ is defined with respect to that particular source. From the perspective of a receiving component $k$, its positivity is influenced by all possible source components $l$ to which it is connected. The time at which $u_k$ becomes strictly positive is then determined by the earliest propagation among all such paths.

This idea is rigorously characterized by the following theorem.

\begin{theorem}\label{thm:time-propagation}
Let $\bm a=\bm0,$ $\bm F\in L^2(0,T;D(\cA^\be))$ with $\be>d/4-1,$ and $\bm C\in L^\infty(\Om)$ satisfy the cooperativeness condition \eqref{eq-coop}. Assume $\bm F\ge\bm0,\not\equiv\bm0$ in $\Om\times(0,T)$ and for each $k=1,\dots,K,$ define the activation time by
\begin{equation}\label{assump:2_1}
t_k:=\inf\{t\ge0\mid F_k(\,\cdot\,,t)\not\equiv0\mbox{ in }\Om\}.
\end{equation}
For each $\ell=1,\dots,K,$ further define the reachable sets by
\begin{equation}\label{assump:2_2}
\begin{aligned}
I_0^\ell & :=\begin{cases}
\{\ell\}, & t_\ell<\infty,\\
\emptyset, & t_\ell=\infty,
\end{cases}\\
I_m^\ell & :=\{
k\mid\exists\,j\in I_{m-1}^\ell\mbox{ such that }c_{kj}\not\equiv0\mbox{ in }\Om\},\quad m=1,2,\dots.
\end{aligned}
\end{equation}
Moreover, we define
\begin{equation}\label{assump:2_3}
R_\ell:=\lim_{m\to\infty}I_m^\ell,\quad\ell=1,\dots,K
\end{equation}
and assume
\begin{equation}\label{assump:2_4}
\bigcup_{\ell=1}^K R_\ell=\{1,\dots,K\}.
\end{equation}
Then for each $k=1,\dots,K,$ the solution to \eqref{Eq:Coupled_prototype_matrix} satisfies
\begin{equation}\label{assump:3_1}
u_k>0\quad\mbox{in }\Om\times(\tau_k,T),\quad\tau_k:=\min\{t_\ell\mid\ell=1,\dots,K,\ k\in R_\ell\}.
\end{equation}
\end{theorem}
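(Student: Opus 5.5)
By linearity and Lemma~\ref{Lem:vector_well-posedness}(iii), I reduce to single-source subsystems. For each $\ell$ with $t_\ell<\infty$, let $\bm u^\ell$ be the mild solution of \eqref{Eq:Coupled_prototype_matrix} with $\bm a=\bm0$ and source $F_\ell\bm e_\ell$, where $\bm e_\ell$ is the $\ell$-th unit vector. Uniqueness (Lemma~\ref{Lem:vector_well-posedness}(ii)) gives $\bm u=\sum_\ell\bm u^\ell$. The cooperativeness condition \eqref{eq-coop} and the weak maximum principle give $\bm u^\ell\ge\bm0$ for every $\ell$. Hence it suffices to prove the following claim: for each $\ell$ with $t_\ell<\infty$ and each $k\in R_\ell$, we have $u_k^\ell>0$ in $\Om\times(t_\ell,T)$. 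Indeed, for fixed $k$, choose $\ell$ attaining the minimum in $\tau_k$; this is possible by \eqref{assump:2_4}. Then $u_k\ge u_k^\ell>0$ on $(\tau_k,T)$.

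To prove the claim I will repeat the Picard scheme from the proof of Theorem~\ref{thm:coupled_F=0}, with sources in place of initial data. As there, I set $\bm B=\bm\Sigma-\bm C\ge\bm O$ and $\wt\BA=\BA+\bm\Sigma$. I define $\bm v^{(0)}=\bm0$ and, for $m\ge1$, let $\bm v^{(m)}$ solve
\[
(\pa_t^{\bm\al}+\wt\BA)\bm v^{(m)}=\bm B\bm v^{(m-1)}+F_\ell\bm e_\ell .
\]
As in \cite{LHL23,BL26}, the sequence $\bm v^{(m)}$ is nondecreasing and converges to $\bm u^\ell$. Set $J_m:=\bigcup_{j\le m}I_j^\ell$, so that $R_\ell=\bigcup_m J_m$; the limit in \eqref{assump:2_3} is read as this cumulative union. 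I then prove by induction the assertion
\[
\forall\,k\in J_m:\quad v_k^{(m+1)}>0 \ \text{ in } \Om\times(t_\ell,T).
\]

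For the base case $m=0$, the equation for $v_\ell^{(1)}$ has source $F_\ell\ge0$ with activation time $t_\ell$, so Theorem~\ref{thm:SMP(a=0)} applies directly. For the inductive step, take $k\in J_m\setminus J_{m-1}$. Then there is $j\in J_{m-1}$ with $c_{kj}\not\equiv0$, and $k\neq j$, so $c_{kj}\le0$ and $c_{kj}\not\equiv0$. This gives
\[
(\bm B\bm v^{(m)})_k\ge -c_{kj}v_j^{(m)}\ge0,
\]
and the right-hand side is nontrivial on $\Om\times(t_\ell+\varepsilon,t_\ell+2\varepsilon)$ for every $\varepsilon>0$, because $v_j^{(m)}>0$ there. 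Therefore the source of $v_k^{(m+1)}$ has activation time at most $t_\ell$. It also vanishes on $(0,t_\ell)$, since all the $\bm v^{(m)}$ vanish before $t_\ell$ by causality of the Volterra representation. So its activation time is exactly $t_\ell$, and Theorem~\ref{thm:SMP(a=0)} yields $v_k^{(m+1)}>0$ on $(t_\ell,T)$. Indices already in $J_{m-1}$ are handled by monotonicity: $v_k^{(m+1)}\ge v_k^{(m)}>0$. Since $K$ is finite, $R_\ell=J_M$ for some $M$, and then $\bm u^\ell\ge\bm v^{(M+1)}$ proves the claim.

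The main obstacle is verifying the hypotheses of Theorem~\ref{thm:SMP(a=0)} for the induced sources. Regularity should be fine: $(\bm B\bm v^{(m)})_k$ must lie in $L^2(0,T;D(\cA^\be))$, which I will obtain from the smoothing estimate in Lemma~\ref{Lem:vector_well-posedness}(ii), possibly after lowering $\be$ within the admissible range $\be>d/4-1$. The delicate point is the activation time. Strict positivity of the product $c_{kj}v_j^{(m)}$ needs $c_{kj}<0$ on a set of positive measure, and this is exactly what $c_{kj}\le0$ together with $c_{kj}\not\equiv0$ supplies. Combined with $v_j^{(m)}>0$ a.e.\ on $\Om\times(t_\ell,T)$, the product is nonzero on every time slab after $t_\ell$, which is what the activation-time argument needs.
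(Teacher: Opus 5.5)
Your proposal is correct and is essentially the paper's argument. It uses the same Picard scheme with $\bm B=\bm\Sigma-\bm C\ge\bm O$, the same induction along the reachable sets (Theorem~\ref{thm:SMP(a=0)} for newly reached indices, monotonicity for the others), and the same superposition over the single-source problems with data $F_\ell\bm e_\ell$. The only difference is order: you superpose first and run the iteration on each single-source subsystem, which is exactly what the paper's Step~2 tacitly relies on, whereas the paper iterates with the full $\bm F$ and superposes afterwards. Your cumulative-union reading of $R_\ell$ and your causality check for the activation time are welcome clarifications.

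One minor caveat, shared with the paper, concerns regularity. When $d\ge4$ the required $\be>d/4-1$ is positive, and multiplication by the merely $L^\infty$ entries of $\bm B$ need not preserve $D(\cA^\be)$, so the smoothing estimate does not deliver the hypothesis of Theorem~\ref{thm:SMP(a=0)}. Instead, observe that its proof only uses the Green's function representation, which is valid for any nonnegative source in $L^2(\Om\times(0,T))$.
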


\begin{remark} We now explain the assumptions \eqref{assump:2_1}--\eqref{assump:2_4} and the corresponding conclusion \eqref{assump:3_1}.
\begin{enumerate}
\item Assumptions \eqref{assump:2_1}--\eqref{assump:2_4} concerns the connectivity of the coupled system and consists of several components. The first condition \eqref{assump:2_1} specifies the activation time of the source term in each equation which also allows the possibility that some equations admit no source term, i.e., $t_k=\infty$. The conditions \eqref{assump:2_2}--\eqref{assump:2_3} describe the connectivity among equations with nontrivial source terms. In particular, for each such equation $\ell$, we define $R_\ell$ as the set of equations that can be reached from $\ell$ through finitely many steps, namely, the reachable set starting from $\ell$. Finally, condition \eqref{assump:2_4} imposes a global connectivity requirement, ensuring that the entire coupled system is connected to the source terms.
\item The conclusion \eqref{assump:3_1} indicates that different components may become strictly positive at different moments, reflecting a propagation of positivity. More precisely, for each equation the time at which it becomes positive coincides with the earliest time at which any source term that can influence it (including its own) becomes positive.
\end{enumerate}
\end{remark}

\begin{proof}
We divide the proof into two steps. In the first step, we establish the propagation of positivity from a single source. In the second step, we extend the result to multiple sources via the superposition principle.

We begin by using the Picard iteration with trivial initial condition to establish the propagation of positivity from a nontrivial source term. The convergence of the sequence \eqref{Eq:process2} to the unique mild solution and its monotonicity are guaranteed by \cite{LHL23} and \cite[Proposition 2]{BL26}, respectively.
\begin{equation}\label{Eq:process2}
\begin{cases}
(\pa_t^{\bm\al}+\wt\BA)\bm u^{(m)}=\bm B\bm u^{(m-1)}+\bm F & \mbox{in }\Om\times(0,T),\\
\bm u^{(m)}=\bm0 & \mbox{on }\pa\Om\times(0,T),
\end{cases}\quad m=1,2,\dots.
\end{equation}

Since the case $t_\ell=\infty$ is trivial, we fix $\ell\in\{1,\dots,K\}$ with $t_\ell<\infty$. Recall the definition of the index sets of $\ell$ in assumption \eqref{assump:2_2}, we now aim at showing by induction on $m$ that
\begin{equation}\label{assertion:2}
    \forall\,m=0,1,\dots,\ \forall k\in I_m^\ell,\quad u_k^{(m+1)}>0\quad\mbox{in }\Om\times(t_\ell,T),
\end{equation}
where $t_\ell$ was defined in \eqref{assump:2_1}.

For the initial step, $I_0^\ell=\{\ell\}$, $t_\ell=\inf\{t\ge0\mid F_\ell(\,\cdot\,,t)\not\equiv0\mbox{ in }\Om\}<\infty$ and $u_\ell^{(1)}$ satisfies
\[
\begin{cases}
\pa_t^{\al_\ell}u_\ell^{(1)}+\wt \cA_\ell u_\ell^{(1)}=F_\ell & \mbox{in }\Om\times(0,T),\\
u_\ell^{(1)}=0 & \mbox{on }\pa\Om\times(0,T).
\end{cases}
\]
Since $F_\ell\ge0$ and $F_\ell\not\equiv0$, Theorem~\ref{thm:SMP(a=0)} yields 
$u_\ell^{(1)}>0 \;\mbox{in }\Om\times(t_\ell,T).$

Now we assume that the assertion \eqref{assertion:2} holds true for some
$m-1=0,1,\dots$. For any $k\in I_m^\ell$, we distinguish two cases, $k\in I_m^\ell\cap I_{m-1}^\ell$ and $k\in I_m^\ell\setminus I_{m-1}^\ell$. For the case of $k\in I_m^\ell\cap I_{m-1}^\ell$, then the induction hypothesis gives $k\in I_{m-1}^\ell$, that is $u_k^{(m)}>0 $ in $\Om\times(t_\ell,T).$ By the monotonicity of the Picard sequence $\bm u^{(m)}$, we have $u_k^{(m+1)}\ge u_k^{(m)}>0$ in $\Om\times(t_\ell,T).$

For the case of $k\in I_m^\ell\setminus I_{m-1}^\ell$, it follow from the definition~\eqref{assump:2_2} that there exists $j\in I_{m-1}^\ell$ such that $c_{kj}\not\equiv0$ in $\Om$. For such a $j$, the induction assumption asserts $u_j^{(m)}>0$ in $\Om\times(t_\ell,T).$ On the other hand, since $k\not\in I_{m-1}^\ell$, we have $k\not=j$. Hence $c_{kj}\le0,\not\equiv0$ in $\Om$ according to \eqref{eq-coop}. Then by the definition of $\bm B$, we obtain
\[
(\bm B\bm u^{(m)})_k=\sum_{i=1}^Kb_{ki}u_i^{(m)}\ge-c_{kj}u_j^{(m)}\ge0,\not\equiv0, \quad\mbox{in }\Om\times(0,T),
\]
and $\inf\{t\ge0\mid(\bm B\bm u^{(m)})_k\not\equiv0 \mbox{ in }\Om\}=t_\ell$.

Consequently combining with the non-negativity of $\bm F$, then by Theorem~\ref{thm:SMP(a=0)} claims that the solution to the inhomogeneous problem
\[
\begin{cases}
\pa_t^{\al_k}u_k^{(m+1)}+\wt \cA_k u_k^{(m+1)}=(\bm B\bm u^{(m)})_k+F_k & \mbox{in }\Om\times(0,T),\\
u_k^{(m+1)}=0 & \mbox{on }\pa\Om\times(0,T),
\end{cases}
\]
satisfies $u_k^{(m+1)}>0$ at least in $\Om\times(t_\ell,T)$. This completes the verification of assertion~\eqref{assertion:2}.

Finally, according to the connectivity condition~\eqref{assump:2_4}, each index $k$ belongs to $R_\ell$ which has been defined in \eqref{assump:2_3}, that is, every $k\in R_\ell$ belongs to $I_{m_k}^\ell$ for some $m_k=0,1,\dots,$ and by the induction result~\eqref{assertion:2}, it follows that
\begin{equation}\label{thm:induction_for_k_inCoupled}
\forall\,k\in R_\ell,\quad u_k>0\quad\mbox{in }\Om\times(t_\ell,T).
\end{equation}

In the second step, we take advantage of the superposition principle to investigate the auxiliary problem driven only by the $\ell$-th component of the source, i.e., $\bm F=\sum_{{\ell=1}}^K F_\ell\bm e_\ell$, where $\{\bm e_1,\dots,\bm e_K\}$ denotes the standard basis of $\BR^K$ and let $\bm u_\ell$ be the corresponding solution. Due to the linearity of the system, it follows that $\bm u=\sum_{\ell=1}^K\bm u_\ell$ or equivalently, for $k=1,\dots,K$, $u_k=\sum_{\ell=1}^K (\bm u_\ell)_k$. Furthermore, note that for each $\ell$ the contribution $u_k^{(\ell)}$ is nontrivial only if $k\in R_\ell$. Hence, we arrive at
\[
u_k=\sum_{\ell:\, k\in R_\ell} (\bm u_\ell)_k.
\]

For each such $\ell$, we recall the conclusion obtained in \eqref{thm:induction_for_k_inCoupled} to define
\[
\tau_k:=\min\{t_\ell\mid\ell=1,\dots,K,\ k\in R_\ell\}.
\]
For $t>\tau_k$, there exists $\ell$ such that $k\in R_\ell$ and $t>t_\ell$. Therefore, we arrive at $(\bm u_\ell)_k>0$ in $\Om\times(\tau_k,T)$, and eventually
\[
u_k=\sum_{\ell:k\in R_\ell}(\bm u_\ell)_k\ge(\bm u_\ell)_k>0\quad\mbox{in }\Om\times(\tau_k,T).
\]
This completes the proof.
\end{proof}


\section{Conclusions}\label{sec-concl}

The strict positivity of solutions to time-fractional diffusion equations is a fundamental property. However for coupled systems, a systematic understanding of positivity propagation was absent. In this work, we propose a new perspective for analyzing the positivity mechanism of subdiffusion equations and systems. For scalar equations, we establish a connection between subdiffusion equations and classical heat equations, allowing the former to inherit the positivity of the latter. Based on this idea, we prove the strict positivity of solutions for both homogeneous and inhomogeneous problems.

Building upon these results, we further investigate the propagation of positivity in coupled systems. In particular, we characterize how positivity evolves over time under both initial conditions and source terms, and reveal the interaction-driven propagation mechanism among different components. This study provides a structural understanding of coupled systems and lays a foundation for the analysis of more complex problems, such as nonlinear coupled systems and degenerate parabolic equations. Moreover, the proposed framework has potential applications in a variety of models, including population dynamics, predator-prey systems, tumor growth models and the propagation of activation signals in neural networks.


\paragraph{Acknowledgment}

The first author is supported by China Scholarship Council and National Natural Science Foundation of China (Nos.\! 12371428 and 11871435). The second author is supported by JSPS KAKENHI Grant Numbers JP23KK0049 and JP26K06926, Guangdong Basic and Applied Basic Research Foundation (No.\! 2025A1515012248) and FY2025 MUSUBIME of Kyoto University.


\section*{Declarations}

\paragraph{Conflict of interest}
The authors declare that there are no conflicts of interest.




\begin{thebibliography}{99}

\bibitem{AG92}
Adams, E.E., Gelhar, L.W.: Field study of dispersion in a heterogeneous aquifer 2. Spatial moments analysis. Water Resources Res. {\bf28}(12), 3293--3307 (1992)

\bibitem{A75}
Adams, R.A.: Sobolev Spaces. Academic Press, New York (1975)

\bibitem{BL26}
BenSalah, M., Liu, Y.: Inverse $t$-source problem and a strict positivity property for coupled subdiffusion systems. Fract. Calc. Appl. Anal. {\bf29}(4), 44 pp. (2026).

\bibitem{EK04}
Eidelman, S.D., Kochubei, A.N.: Cauchy problem for fractional diffusion equations. J. Differential Equations {\bf199}(2), 211--255 (2004)

\bibitem{F99}
Folland, G.B.: Real Analysis: Modern Techniques and Their Applications (Second Edition). John Wiley \& Sons, New York (1999)

\bibitem{GCR92}
Giona, M., Cerbelli, S., Roman, H.E.: Fractional diffusion equation and relaxation in complex viscoelastic materials. Phys. A {\bf191}(1), 449--453 (1992)

\bibitem{GKMR14}
Gorenflo, R., Kilbas, A.A., Mainardi, F., Rogosin, S.V.: Mittag-Leffler Functions, Related Topics and Applications. Springer, Berlin (2014)

\bibitem{GLY15}
Gorenflo, R., Luchko, Y., Yamamoto, M.: Time-fractional diffusion equation in the fractional Sobolev spaces. Fract. Calc. Appl. Anal. {\bf18}(3), 799--820 (2015).

\bibitem{J21}
Jin, B.: Fractional Differential Equations: An Approach via Fractional Derivatives. Springer, Cham (2021)

\bibitem{JLZ13}
Jin, B., Lazarov, R., Zhou, Z.: Error estimates for a semidiscrete finite element method for fractional order parabolic equations. SIAM J. Numer. Anal. {\bf51}(1), 445--466 (2013)

\bibitem{JZ23}
Jin, B., Zhou, Z.: Numerical Treatment and Analysis of Time-Fractional Evolution Equations. Springer, Cham (2023)  

\bibitem{KR23}
Kaltenbacher, B., Rundell, W.: Inverse Problems for Fractional Partial Differential Equations. AMS, Providence, RI (2023)  

\bibitem{KRY20}
Kubica, A., Ryszewska, K., Yamamoto, M.: Time-Fractional Differential Equations: A Theoretical Introduction. Springer, Singapore (2020)

\bibitem{LHL23}
Li, Z., Huang, X., Liu, Y.: Initial-boundary value problems for coupled systems of time-fractional diffusion equations. Fract. Calc. Appl. Anal. {\bf26}(2), 533--566 (2023).

\bibitem{LLY15}
Li, Z., Liu, Y., Yamamoto, M.: Initial-boundary value problems for multi-term time-fractional diffusion equations with positive constant coefficients. Appl. Math. Comput. {\bf257}, 381--397 (2015)

\bibitem{LiLiuY19}
Li, Z., Liu, Y., Yamamoto, M.: Inverse problems of determining parameters of the fractional partial differential equations. in: Handbook of Fractional Calculus with Applications. Volume 2: Fractional Differential Equations, De Gruyter, Berlin, 431--442 (2019)

\bibitem{LiY19}
Li, Z., Yamamoto, M.: Inverse problems of determining coefficients of the fractional partial differential equations. in: Handbook of Fractional Calculus with Applications. Volume 2: Fractional Differential Equations, De Gruyter, Berlin, 443--464 (2019)  

\bibitem{LX07}
Lin, Y., Xu, C.: Finite difference/spectral approximations for the time-fractional diffusion equation. J. Comput. Phys. {\bf225}, 1533--1552 (2007)

\bibitem{LiuLiY19}
Liu, Y., Li, Z., Yamamoto, M.: Inverse problems of determining sources of the fractional partial differential equations. in: Handbook of Fractional Calculus with Applications. Volume 2: Fractional Differential Equations, De Gruyter, Berlin, 411--429 (2019)

\bibitem{LRY16}
Liu, Y., Rundell, W., Yamamoto, M.: Strong maximum principle for fractional diffusion equations and an application to an inverse source problem. Fract. Calc. Appl. Anal. {\bf19}(4), 888--906 (2016).

\bibitem{L09}
Luchko, Y.: Maximum principle for the generalized time-fractional diffusion equation. J. Math. Anal. Appl. {\bf351}(1), 218--223 (2009)

\bibitem{L10}
Luchko, Y.: Some uniqueness and existence results for the initial-boundary value problems for the generalized time-fractional diffusion equation. Comput. Math. Appl. {\bf59}(5), 1766--1772 (2010)

\bibitem{L19}
Luchko, Y.: The Wright function and its applications. in: Handbook of Fractional Calculus with Applications. Volume 1: Basic Theory, De Gruyter, Berlin, 241--268 (2019)

\bibitem{LuchkoY19}
Luchko, Y., Yamamoto, M.: Maximum principle for the time-fractional PDEs. in: Handbook of Fractional Calculus with Applications. Volume 2: Fractional Differential Equations, De Gruyter, Berlin, 299--326 (2019)

\bibitem{LY23}
Luchko, Y., Yamamoto, M.: Comparison principles for the time-fractional diffusion equations with the Robin boundary conditions. Part I: Linear equations. Fract. Calc. Appl. Anal. {\bf26}(4), 1504--1544 (2023).

\bibitem{LY25}
Luchko, Y., Yamamoto, M.: Comparison principles for the time-fractional diffusion equations with the Robin boundary conditions. Part II: Semilinear equations. Fract. Calc. Appl. Anal. {\bf28}(5), 2198--2240 (2025).

\bibitem{M10}
Mainardi, F.: Fractional Calculus and Waves in Linear Viscoelasticity: An Introduction to Mathematical Models. Imperial College Press, London (2010)

\bibitem{MK00}
Metzler, R., Klafter, J.: The random walk's guide to anomalous diffusion: a fractional dynamics approach. Phys. Rep. {\bf339}(1), 1--77 (2000)

\bibitem{SY11}
Sakamoto, K., Yamamoto, M.: Initial value/boundary value problems for fractional diffusion-wave equations and applications to some inverse problems. J. Math. Anal. Appl. {\bf382}(1), 426--447 (2011)

\bibitem{SSV12}
Schilling, R.L., Song, R., Vondracek, Z.: Bernstein Functions: Theory and Applications. De Gruyter, Berlin (2012)

\bibitem{Z13}
Zacher, R.: A weak Harnack inequality for fractional evolution equations with discontinuous coefficients. Ann. Sc. Norm. Super. Pisa Cl. Sci. {\bf12}(4), 903--940 (2013)

\end{thebibliography}
\end{document}